\theoremstyle{plain}
\newtheorem{theorem}{Theorem}[section]
\newtheorem{proposition}[theorem]{Proposition}
\newtheorem{lemma}[theorem]{Lemma}
\newtheorem{corollary}[theorem]{Corollary}
\theoremstyle{definition}
\newtheorem{remark}[theorem]{Remark}
\theoremstyle{remark}
\numberwithin{equation}{section}
\newcommand{\NN}{\mathbb{N}}
\newcommand{\RR}{\mathbb{R}}
\newcommand{\ZZ}{\mathbb{Z}}
\newcommand{\cA}{\mathcal{A}}
\newcommand{\cB}{\mathcal{B}}
\newcommand{\diff}{\mathrm{d}}
\newcommand{\dd}{\,\mathrm{d}}
\newcommand{\1}{\mathbf{1}}
\newcommand*{\ol}[1]{\overline{#1}}
\newcommand*{\sef}{\mathrm{sf}}
\begin{document}
\title{%
Minimal Conditions for Implications of\\Gronwall--Bellman Type\thanks{The authors are very grateful for the referee's pertinent remarks on the original manuscript.}%
}
\date{\today}
\author{%
  Martin Herdegen%
  \thanks{
  Department of Statistics, University of Warwick, Conventry, CV4 7AL, UK, email
  \href{mailto:M.Herdegen@warwick.ac.uk}{\nolinkurl{M.Herdegen@warwick.ac.uk}}.
  Financial support from the Swiss National Science Foundation (SNF) through grant SNF 105218\_150101 is
gratefully acknowledged.
  }
  \and
  Sebastian Herrmann%
  \thanks{
  Department of Mathematics, University of Michigan, 530 Church Street, Ann Arbor, MI 48109, USA, email
  \href{mailto:sherrma@umich.edu}{\nolinkurl{sherrma@umich.edu}}.
  Financial support by the Swiss Finance Institute is gratefully acknowledged.
  }
}
\maketitle

\begin{abstract}
Gronwall--Bellman type inequalities entail the following implication: if a sufficiently integrable function satisfies a certain homogeneous linear integral inequality, then it is nonpositive. We present a minimal (necessary and sufficient) condition on the Borel measure underlying the integrals for this implication to hold. The condition is also a necessary prerequisite for any nontrivial bound on solutions to inhomogeneous linear integral inequalities of Gronwall--Bellman type.
\end{abstract}

\vspace{0.5em} 

{\small
\noindent \emph{Keywords} Gronwall--Bellman inequalities; Integral inequalities; Semi-finite measures.

\vspace{0.25em}
\noindent \emph{AMS MSC 2010}
Primary,
26D10; %  REAL FUNCTIONS / Inequalities / Inequalities involving derivatives and differential and integral operators
Secondary,
45A05. % INTEGRAL EQUATIONS / Linear integral equations / Linear integral equations
}

%=================================================================================
\section{Introduction}
%=================================================================================
\label{sec:introduction}

It is the purpose of this paper to characterise all Borel measures $\mu$ on $\RR$ for which the following implication holds:
\begin{align}
\label{eqn:implication}
y(t)
\leq \int_{(-\infty,t)} y\dd\mu
\text{ for }\mu\text{-a.e.~}t\in\RR
\;\;\Longrightarrow\;\;
y(t)
\leq 0\text{ for }\mu\text{-a.e.~}t\in\RR.
\end{align}
Here, $y$ is any Borel function on $\RR$ with $\int_{(-\infty,t)} \vert y \vert \dd\mu < \infty$ for each $t \in \RR$, so that the integrals in \eqref{eqn:implication} exist.

Does the implication \eqref{eqn:implication} hold for all Borel measures? If $\mu$ is the restriction of the Lebesgue measure to an interval of the form $(c,d)$ for $-\infty < c < d \leq \infty$, then the implication \eqref{eqn:implication} holds and is a special case of the famous Gronwall--Bellman lemma (see, e.g., \cite[Lemma D.2]{SellYou2002}). However, if $\mu(\diff t)=\frac{1}{t-c}\1_{(c,d)}(t) \dd t$, then the function $y(t) = t-c$ satisfies the integral inequality in \eqref{eqn:implication}, but $y > 0$ $\mu$-a.e., so that \eqref{eqn:implication} fails in this case.

It turns out that the decisive difference between these two examples is that in the former, $\mu((c,t)) < \infty$ for some $t > c$, while in the latter, $\mu((c,t)) = \infty$ for every $t > c$. Indeed, our main result Theorem~\ref{thm:global} entails that the implication \eqref{eqn:implication} holds if and only if the following condition on the measure $\mu$ holds:
\begin{itemize}
\item[(M)] For each $a \in [-\infty,\infty)$, there is $t > a$ such that $\mu_\sef((a,t)) < \infty$.
\end{itemize}
Here, $\mu_\sef$ is the so-called semi-finite part of $\mu$. (If $\mu$ is semi-finite as in the two examples above, then $\mu_\sef = \mu$; cf.~Proposition~\ref{prop:semi-finite}.)

In addition to this ``global'' result, we also provide a ``local'' version, which states that for fixed $a \in[-\infty,\infty)$, the ``localised'' implication
\begin{align}
\label{eqn:implication:local}
\exists b > a:
\left[
y(t)
\leq \int_{(a,t)} y\dd\mu
\text{ for }\mu\text{-a.e.~}t\in(a,b)
\;\;\Longrightarrow\;\;
y(t)
\leq 0\text{ for }\mu\text{-a.e.~}t\in(a,b)
\right]
\end{align}
is equivalent to the existence of $t > a$ such that the semi-finite part of $\mu$ puts finite mass on $(a,t)$. A consequence of our main result is that condition (M) is also necessary for any nontrivial bound on solutions to \emph{inhomogeneous} linear integral inequalities; see the last paragraph of Section~\ref{sec:main results}.

Bounds on solutions to integral or differential inequalities are an important tool for the analysis of various integral or differential equations.\footnote{For instance, using standard arguments (see, e.g., \cite[Section I.3.1]{BainovSimeonov1992} for the classic case or \cite[Section 4]{Rao1979} for the case of Lebesgue--Stieltjes integrals), our main result can be used to derive a uniqueness result for integral equations with Borel measures satisfying condition (M).} The classic results of Gronwall~\cite{Gronwall1919}, Reid~\cite{Reid1930}, and Bellman~\cite{Bellman1943} have been extended in many different ways over the past century; we refer to \cite{ChandraFleishman1970} and to the monograph \cite{BainovSimeonov1992} for an overview and an extensive list of references. While most of the extant results stay within the realm of ordinary Riemann integration, also other integrals are considered in the literature: Riemann--Stieltjes integrals \cite{Jones1964, ErbeKong1990}, modified Stieltjes integrals \cite{SchmaedekeSell1968}, abstract Stieltjes integrals \cite{Herod1969, Mingarelli1981}, Lebesgue--Stieltjes integrals \cite{Pandit1978, Rao1979, DasSharma1980}, and integrals on general measure spaces. In particular, very general results on Gronwall--Bellmann type inequalities for general measure spaces were obtained by Horv\'
ath~\cite{Horvath1996, Horvath1999, Horvath2003} (see also Gy\H{o}ri and Horv\'ath~\cite{GyoriHorvath1997}). However, in the special case of the homogeneous linear integral inequality considered in \eqref{eqn:implication}, our condition (M) is still weaker than the conditions imposed in \cite[Theorem 3.1]{Horvath1996}.

The remainder of the article is organised as follows. Section~\ref{sec:main results} states and discusses our main results. Section~\ref{sec:auxiliary results} contains auxiliary results. The proofs of our main results are in Section~\ref{sec:proofs}.

%=================================================================================
\section{Main results and ramifications}
%=================================================================================
\label{sec:main results}

Before we can state our main results, we need to introduce the so-called semi-finite part of a measure.

\paragraph{Semi-finite measures.}
Fix a measure space $(X,\Sigma,\mu)$. Recall that $\mu$ is called \emph{semi-finite} if for every $E\in\Sigma$ with $\mu(E) = \infty$, there is $F\in\Sigma$ such that $F \subset E$ and $0 < \mu(F) < \infty$ \cite[Definition 211F]{Fremlin2003}. As in \cite[Exercise 213X~(c)]{Fremlin2003}, the \emph{semi-finite part} of $\mu$ is the measure $\mu_\sef: \Sigma \to [0,\infty]$ given by
\begin{align*}
\mu_\sef(E)
&= \sup \lbrace\mu(E\cap F) : F \in \Sigma,\,\mu(F) <\infty \rbrace.
\end{align*}
The following proposition collects basic facts about $\mu_\sef$. We omit the proofs (see \cite[Lemma 213A and Exercise 213X~(c)]{Fremlin2003}).

\begin{proposition}
\label{prop:semi-finite}
Let $(X,\Sigma,\mu)$ be a measure space.
\begin{enumerate}
\item $\mu_\sef$ is a semi-finite measure on $(X,\Sigma)$ and absolutely continuous with respect to $\mu$.
\item Any $\mu$-integrable real-valued function $f$ is $\mu_\sef$-integrable, with the same integral. 
\item[\rm (b$^\prime$)] For any $\mu_\sef$-integrable real-valued function $f_\sef$, there exists a $\mu$-integrable real-valued function $f$ such that $f = f_\sef$ $\mu_\sef$-a.e.
\item[\normalfont (c)] If $\mu$ is semi-finite, then $\mu = \mu_\sef$ and for every $E \in \Sigma$,
\begin{align*}
\mu(E)
&= \sup\lbrace \mu(F) : F\in\Sigma,\,F\subset E,\,\mu(F) < \infty \rbrace.
\end{align*}
\end{enumerate}
\end{proposition}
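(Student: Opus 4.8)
The plan is to prove the four claims in the order (a), (b), (b$'$), (c). The central observation, immediate from the definition on writing $B = E \cap F$, is the equivalent formula
\[
\mu_\sef(E) = \sup\lbrace \mu(B) : B\in\Sigma,\, B\subseteq E,\, \mu(B) < \infty\rbrace,
\]
which in particular gives $\mu_\sef(E) = \mu(E)$ whenever $\mu(E) < \infty$ (take $B = E$), and — once $\mu_\sef$ is known to be a measure — also whenever $E$ is $\sigma$-finite (by continuity from below). This ``$\mu_\sef = \mu$ on $\sigma$-finite sets'' bridge is what all the integrability statements rest on.

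For (a), I would first check that $\mu_\sef$ is a measure. Monotonicity and $\mu_\sef(\emptyset)=0$ are clear; for $\sigma$-additivity on a disjoint union $E = \bigcup_n E_n$, the inequality $\leq$ holds because any admissible $F$ satisfies $\mu(E\cap F) = \sum_n \mu(E_n\cap F) \leq \sum_n \mu_\sef(E_n)$, and for $\geq$ one fixes $N$ and $\varepsilon > 0$, chooses for each $n \leq N$ a finite-measure set $F_n$ with $\mu(E_n\cap F_n)$ within $\varepsilon/N$ of $\mu_\sef(E_n)$ (or exceeding any prescribed constant when $\mu_\sef(E_n)=\infty$), and tests $E$ against $F = \bigcup_{n\le N} F_n$, which still has finite $\mu$-measure. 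Semi-finiteness of $\mu_\sef$: if $\mu_\sef(E)=\infty$, some admissible $F$ has $\mu(E\cap F) > 0$, and $G := E\cap F \subseteq E$ then satisfies $0 < \mu_\sef(G) \leq \mu(G) \leq \mu(F) < \infty$. Absolute continuity is trivial, since $\mu(E)=0$ forces $\mu(E\cap F)=0$ for every $F$.

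For (b), I would build up from indicators: $\int \1_E \dd\mu_\sef = \mu_\sef(E) = \mu(E) = \int \1_E \dd\mu$ when $\mu(E) < \infty$; by linearity this passes to $\mu$-integrable simple functions (whose nonzero level sets have finite $\mu$-measure), then to nonnegative $\mu$-integrable $f$ via an increasing simple approximation and monotone convergence for $\mu_\sef$, and finally to general $f$ by splitting into $f^+$ and $f^-$. For (b$'$), given a $\mu_\sef$-integrable $f_\sef$, the set $\lbrace f_\sef\neq 0\rbrace$ is covered by $A_n := \lbrace |f_\sef| > 1/n\rbrace$, each with $\mu_\sef(A_n) < \infty$; using the sup-formula, pick finite-$\mu$-measure sets $B_{n,k}\subseteq A_n$ with $\mu(B_{n,k}) \to \mu_\sef(A_n)$ and set $G := \bigcup_{n,k} B_{n,k}$. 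Then $G$ is $\sigma$-finite, so $\mu_\sef = \mu$ on subsets of $G$; moreover $\mu_\sef(A_n\cap G) \ge \mu(B_{n,k}) \to \mu_\sef(A_n)$, hence $\mu_\sef(A_n\setminus G) = \mu_\sef(A_n) - \mu_\sef(A_n\cap G) = 0$ and $\lbrace f_\sef\neq 0\rbrace\setminus G$ is $\mu_\sef$-null. The function $f := f_\sef\1_G$ then equals $f_\sef$ $\mu_\sef$-a.e., and it is $\mu$-integrable since $\int |f|\dd\mu = \int_G |f_\sef|\dd\mu = \int_G |f_\sef|\dd\mu_\sef < \infty$.

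For (c), $\mu_\sef \leq \mu$ always, with equality on finite-measure sets as noted. If $\mu$ is semi-finite and $\mu(E) = \infty$, take an increasing sequence $F_k\subseteq E$ of finite-measure sets with $\mu(F_k)\uparrow s := \sup\lbrace \mu(B) : B\subseteq E,\, \mu(B) < \infty\rbrace$; if $s$ were finite, then $\mu(E\setminus\bigcup_k F_k) = \infty$, so semi-finiteness of $\mu$ produces $H\subseteq E\setminus\bigcup_k F_k$ with $0 < \mu(H) < \infty$, and $\mu((\bigcup_k F_k)\cup H) > s$ is a contradiction; hence $s = \infty = \mu(E)$, which gives both $\mu = \mu_\sef$ and the displayed supremum identity (the latter being just the sup-formula restated). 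The main obstacle in all of this is organisational rather than conceptual: getting the $\varepsilon/N$ bookkeeping right in $\sigma$-additivity, and — more importantly — invoking the ``$\mu_\sef = \mu$ on $\sigma$-finite sets'' bridge carefully enough in (b$'$) to be sure the constructed $f$ is genuinely $\mu$-integrable and not merely $\mu_\sef$-integrable.
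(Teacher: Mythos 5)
The paper does not give a proof of Proposition~\ref{prop:semi-finite} at all; it explicitly omits the proofs and refers the reader to Fremlin \cite[Lemma 213A and Exercise 213X~(c)]{Fremlin2003}, so there is no in-paper argument to compare against. Your proposal is correct and is essentially the standard proof one would extract from that reference: the equivalent sup-formula $\mu_\sef(E)=\sup\{\mu(B):B\subseteq E,\ \mu(B)<\infty\}$, the consequence $\mu_\sef=\mu$ on finite-measure (hence $\sigma$-finite) sets, the $\varepsilon/N$ truncation argument for countable additivity in (a), the simple-function/monotone-convergence buildup in (b), and the exhaustion-by-a-$\sigma$-finite set $G$ in (b$'$) are all sound. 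Two details you handle implicitly but correctly and which are the places one could slip: in (b$'$), the subtraction $\mu_\sef(A_n\setminus G)=\mu_\sef(A_n)-\mu_\sef(A_n\cap G)$ needs $\mu_\sef(A_n)<\infty$, which holds by Chebyshev since $f_\sef$ is $\mu_\sef$-integrable; and the step $\int_G|f_\sef|\dd\mu=\int_G|f_\sef|\dd\mu_\sef$ rests on $\mu$ and $\mu_\sef$ agreeing on all measurable subsets of the $\sigma$-finite set $G$, which follows from the sup-formula plus continuity from below. In (c), when you take $F_k$ increasing with $\mu(F_k)\uparrow s$, you should note (or silently use) that one can replace an arbitrary maximising sequence $B_k$ by its running unions $F_k=\bigcup_{j\le k}B_j$, which remain of finite $\mu$-measure and still converge to $s$ from below. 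None of these are gaps, just points worth writing out if you expand the sketch into a full proof.
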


For any measurable real-valued function on $X$ and any Borel set $B\subset \RR$, $f \in B$ $\mu$-a.e.~implies $f \in B$ $\mu_\sef$-a.e.~by Proposition~\ref{prop:semi-finite}~(a). If $f$ is $\mu$-integrable and if $B$ contains $0$, then  also the converse implication holds, as the following corollary shows.

\begin{corollary}
\label{cor:semi-finite}
Let $(X,\Sigma,\mu)$ be a measure space, $f$ a $\mu$-integrable real-valued function, and $B\subset\RR$ a Borel set containing $0$. Then $f \in B$ $\mu$-a.e.~if and only if $f \in B$ $\mu_\sef$-a.e.
\end{corollary}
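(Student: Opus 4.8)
The plan is to reduce the statement to one elementary fact — that $\mu$ and $\mu_\sef$ coincide on sets of finite measure, hence on $\sigma$-finite sets — and then to use $\mu$-integrability of $f$ to confine the relevant exceptional set to such a set. The implication ``$f \in B$ $\mu$-a.e.~$\Longrightarrow$ $f \in B$ $\mu_\sef$-a.e.'' requires nothing new: it is exactly the observation recorded in the paragraph preceding the statement, which rests on $\mu_\sef \ll \mu$ (Proposition~\ref{prop:semi-finite}~(a)). So I would concentrate entirely on the converse. Assume $f \in B$ holds $\mu_\sef$-a.e.~and set $N := \{x \in X : f(x) \notin B\}$, a measurable set with $\mu_\sef(N) = 0$; the goal is to show $\mu(N) = 0$.

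First I would record the two-line fact that $\mu_\sef(E) = \mu(E)$ whenever $\mu(E) < \infty$: the inequality $\mu_\sef(E) \leq \mu(E)$ is immediate from the definition since $\mu(E \cap F) \leq \mu(E)$ for all $F$, while taking $F = E$ in the defining supremum gives $\mu_\sef(E) \geq \mu(E)$. Next I would use the hypotheses to show that $N$ is $\sigma$-finite with respect to $\mu$. Since $0 \in B$, every point of $N$ has $f(x) \neq 0$, so $N \subseteq \{f \neq 0\} = \bigcup_{n \geq 1} A_n$ with $A_n := \{|f| \geq 1/n\}$; and Markov's inequality together with $\mu$-integrability of $f$ yields $\mu(A_n) \leq n \int_X |f| \dd\mu < \infty$. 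Hence $N = \bigcup_{n \geq 1}(N \cap A_n)$ is a countable union of sets of finite $\mu$-measure. For each $n$, applying the first fact to $N \cap A_n$ gives $\mu(N \cap A_n) = \mu_\sef(N \cap A_n) \leq \mu_\sef(N) = 0$, and countable subadditivity of $\mu$ then forces $\mu(N) = 0$, which is the claim.

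I do not anticipate a genuine obstacle here; the only point that truly matters is recognising that the assumption $0 \in B$ is precisely what pins $N$ inside $\{f \neq 0\}$, so that the $\mu$-integrability of $f$ makes $N$ $\sigma$-finite — the one regime in which $\mu$ and $\mu_\sef$ are indistinguishable. (That the hypothesis $0 \in B$ cannot be dropped is visible already on a two-point space carrying an atom of infinite mass, whose semi-finite part kills that atom.)
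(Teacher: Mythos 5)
Your proof is correct, and it takes a more elementary route than the paper's. The paper proves the nontrivial direction by forming the auxiliary function $|f|\,\1_{\{f\in B^c\}}$, observing that it is $\mu$-integrable with $\mu_\sef$-integral zero, and then invoking Proposition~\ref{prop:semi-finite}~(b) (the fact that a $\mu$-integrable function has the same integral under $\mu_\sef$) to conclude $\int |f|\,\1_{\{f\in B^c\}}\dd\mu=0$, whence $\mu(\{f\in B^c\})=0$ because $0\notin B^c$. You instead bypass that cited lemma and argue from first principles: you show $\mu_\sef$ and $\mu$ agree on sets of finite $\mu$-measure, cover $N=\{f\notin B\}\subseteq\{f\neq 0\}$ by the finite-measure layers $A_n=\{|f|\geq 1/n\}$ via Markov's inequality, and conclude $\mu(N\cap A_n)=\mu_\sef(N\cap A_n)=0$ for each $n$, then sum. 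Both arguments exploit exactly the same two levers --- integrability of $f$ confines the exceptional set to a $\sigma$-finite region, and $0\in B$ is what places $N$ inside $\{f\neq 0\}$ --- but yours makes the $\sigma$-finiteness mechanism explicit where the paper packages it inside Proposition~\ref{prop:semi-finite}~(b). In effect you are unfolding the proof of that proposition in the special case needed; the paper's version is shorter given the cited lemma, while yours is self-contained and exposes the structural reason the corollary holds.
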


\begin{proof}
Suppose that $f \in B$ $\mu_\sef$-a.e. Then $\int_X \vert f \vert \1_{\lbrace f\in B^c\rbrace} \dd\mu_\sef = 0$, and as $\vert f \vert \1_{\lbrace f\in B^c\rbrace}$ is $\mu$-integrable, also $\int_X \vert f \vert \1_{\lbrace f\in B^c\rbrace} \dd\mu = 0$ by Proposition~\ref{prop:semi-finite}~(b). Thus, $\vert f \vert \1_{\lbrace f\in B^c\rbrace}=0$ $\mu$-a.e., and as $0 \not\in B^c$, we conclude that $f \in B$ $\mu$-a.e. The converse implication follows from Proposition~\ref{prop:semi-finite}~(a).
\end{proof}

\paragraph{Main results.}
From now on, we work on the real line $\RR$ with the standard topology and its Borel $\sigma$-algebra $\cB(\RR)$. Our main result comes in a ``local'' and a ``global'' version. For a fixed $a \in [-\infty,\infty)$, the following ``local'' result characterises all Borel measures on $\RR$ for which there exists $b\in(a,\infty)$ such that any solution to the integral inequality \eqref{eqn:thm:local:inequality} is nonpositive $\mu$-a.e.~on $(a,b)$.

\begin{theorem}[Local version]
\label{thm:local}
Let $\mu$ be a Borel measure on $\RR$. Then for all $a\in[-\infty,\infty)$, the following are equivalent:
\begin{itemize}
\item[\normalfont(M$_a$)] There is $t > a$ such that $\mu_\sef((a,t)) < \infty$.

\item[\normalfont(I$_a$)] There is $b \in(a,\infty)$ such that for any real-valued Borel function $y$ on $\RR$ which is $\mu$-integrable over $(a,b)$ and satisfies
\begin{align}
\label{eqn:thm:local:inequality}
y(t)
&\leq \int_{(a,t)} y \dd \mu \quad \text{for } \mu\text{-a.e.~} t \in (a,b),
\end{align}
we have $y \leq 0$ $\mu$-a.e.~on $(a,b)$.
\end{itemize}
\end{theorem}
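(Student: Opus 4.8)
The plan is to establish the two implications separately, in each case passing immediately to the semi-finite part $\mu_\sef$. Two facts will be used repeatedly. First, if $y$ is a real-valued Borel function on $\RR$ that is $\mu$-integrable over $(a,b)$, then $y\1_{(a,t)}$ is $\mu$-integrable for each $t\in(a,b)$, so Proposition~\ref{prop:semi-finite}~(b) gives $\int_{(a,t)} y\dd\mu = \int_{(a,t)} y\dd\mu_\sef$, and Proposition~\ref{prop:semi-finite}~(a) turns the hypothesis \eqref{eqn:thm:local:inequality} into the same inequality with $\mu_\sef$ in place of $\mu$, holding $\mu_\sef$-a.e.\ on $(a,b)$. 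Second, Corollary~\ref{cor:semi-finite} (with $B=(-\infty,0]$) transfers a conclusion ``$y\le 0$ $\mu_\sef$-a.e.\ on $(a,b)$'' back to ``$y\le 0$ $\mu$-a.e.\ on $(a,b)$''; here one also uses that $\mu_\sef$ restricted to $(a,b)$ is the semi-finite part of $\mu$ restricted to $(a,b)$, which is immediate from the definition.

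For (M$_a$)$\Rightarrow$(I$_a$): assume $\mu_\sef((a,t_0))<\infty$ for some $t_0>a$. Since $(a,t)\downarrow\emptyset$ as $t\downarrow a$ and $\mu_\sef$ is finite on $(a,t_0)$, continuity from above yields $b\in(a,t_0)$ with $\delta:=\mu_\sef((a,b))<1$. Let $y$ be as in (I$_a$) and set $z:=\max(y,0)$, which is $\mu$-integrable over $(a,b)$ and, as above, satisfies $z(t)\le\int_{(a,t)} z\dd\mu_\sef$ for $\mu_\sef$-a.e.\ $t\in(a,b)$. With $M:=\int_{(a,b)} z\dd\mu_\sef<\infty$, an induction on $n$ shows $z\le M\delta^n$ $\mu_\sef$-a.e.\ on $(a,b)$: the case $n=0$ is $z(t)\le\int_{(a,t)} z\dd\mu_\sef\le M$, and the step uses $\int_{(a,t)} M\delta^n\dd\mu_\sef = M\delta^n\mu_\sef((a,t))\le M\delta^{n+1}$. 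Letting $n\to\infty$ and using $\delta<1$ forces $z=0$, i.e.\ $y\le 0$, $\mu_\sef$-a.e.\ on $(a,b)$, hence $\mu$-a.e.\ on $(a,b)$ by Corollary~\ref{cor:semi-finite}.

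For the reverse implication I would prove the contrapositive: assume (M$_a$) fails, so $\mu_\sef((a,t))=\infty$ for every $t>a$, and for fixed but arbitrary $b\in(a,\infty)$ construct a function violating (I$_a$). The building block is that whenever $c>a$ there is a Borel set $A\subset(a,c)$ with $\inf A>a$ and $1\le\mu(A)=\mu_\sef(A)<\infty$: since $\mu_\sef((a,c))=\infty$, the definition of $\mu_\sef$ yields a Borel $F$ with $\mu(F)<\infty$ and $\mu((a,c)\cap F)>1$, and intersecting with $[e_k,\infty)$ for a sequence $e_k\downarrow a$ and $k$ large keeps the $\mu$-mass above $1$ while pushing the infimum above $a$; on the resulting set, having finite $\mu$-measure, $\mu_\sef$ and $\mu$ agree. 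Applying this repeatedly produces points $b=c_0>c_1>c_2>\cdots>a$ and pairwise disjoint Borel sets $A_n\subset(a,c_{n-1})$ with $1\le\beta_n:=\mu(A_n)=\mu_\sef(A_n)<\infty$ and $c_n:=\inf A_n$; then $A_m\subset(a,c_n)=(a,\inf A_n)$ for all $m>n$, so $A_m\subset(a,t)$ whenever $t\in A_n$. Put $T_n:=\prod_{k=1}^n(1+\beta_k)^{-1}$ (so $T_0=1$ and, as $\sum_k\beta_k=\infty$, $T_n$ strictly decreases to $0$), set $\epsilon_n:=T_n$, and define $y:=\sum_{n\ge 1}\epsilon_n\1_{A_n}$. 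Then $\int_{(a,b)}|y|\dd\mu=\sum_n\epsilon_n\beta_n=\sum_n(T_{n-1}-T_n)=1<\infty$, so $y$ is $\mu$-integrable over $(a,b)$; for $t\in A_n$, using $A_m\subset(a,t)$ for $m>n$,
\[
\int_{(a,t)} y\dd\mu=\int_{(a,t)} y\dd\mu_\sef\ge\sum_{m>n}\epsilon_m\beta_m=\sum_{m>n}(T_{m-1}-T_m)=T_n=\epsilon_n=y(t),
\]
while $y(t)=0\le\int_{(a,t)} y\dd\mu$ for the remaining $t\in(a,b)$; and $\{y>0\}\supset A_1$ has positive $\mu$-measure. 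Hence (I$_a$) fails for every $b\in(a,\infty)$.

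I expect the main obstacle to be the construction in the reverse direction, for three interlocking reasons: (i) $\mu_\sef(A)<\infty$ does \emph{not} imply $\mu(A)<\infty$ in general, so the building block must genuinely produce a set of finite $\mu$-measure (obtained by intersecting with a $\mu$-finite set); (ii) the ``layers'' $A_n$ must be arranged to march down towards $a$, each lying to the left of all later cutoffs, which is possible precisely because the failure of (M$_a$) keeps $\mu_\sef$ infinite on every $(a,t)$; and (iii) the weights $\epsilon_n$ must be chosen so that the integral inequality holds while $y$ stays $\mu$-integrable, for which it is essential that the masses $\beta_n$ can be taken bounded away from $0$, so that $\sum_k\beta_k=\infty$ and $T_n\to 0$. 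The forward direction is a routine Gronwall-type iteration once the reduction to $\mu_\sef$ is in place.
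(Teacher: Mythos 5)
Your proof is correct, and it takes a genuinely different and more self-contained route than the paper. The paper's proof of (M$_a$)\,$\Rightarrow$\,(I$_a$) passes to $\mu_\sef$ and then invokes the Gronwall--Bellman lemma (Lemma~\ref{lem:gronwall}, which is imported from Horv\'ath~\cite{Horvath1996}); your version instead shrinks the interval until $\mu_\sef((a,b))<1$ and closes with a purely elementary geometric iteration $z\le M\delta^n$, avoiding the Horv\'ath machinery altogether. The paper's proof of (I$_a$)\,$\Rightarrow$\,(M$_a$) proceeds through three lemmas: Lemma~\ref{lem:mu sf to mu} to transfer the construction problem to $\mu_\sef$, Lemma~\ref{lem:subset} to manufacture a set $E$ on which the singularity sits only at $a$, and Lemma~\ref{lem:construction} to produce a positive solution of the integral \emph{equation} via another application of \cite[Theorem~3.1]{Horvath1996}. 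Your construction collapses all of this into a direct, explicit step function: the sets $A_n$ marching down to $a$ play the role of the paper's $F_n$ from Lemma~\ref{lem:subset}, the observation that one can arrange $\mu(A_n)=\mu_\sef(A_n)<\infty$ with $\beta_n\ge 1$ is exactly the use of semi-finiteness (Proposition~\ref{prop:semi-finite}~(c) in the paper, the raw definition of $\mu_\sef$ in yours), and the telescoping weights $\epsilon_n=T_n=\prod_{k\le n}(1+\beta_k)^{-1}$ replace the solution to the integral equation in Lemma~\ref{lem:construction}. What the paper's route buys is reuse of a ready-made Gronwall lemma and the sharper quantitative statement (recorded in the remark after the theorem) that (M$_a$) with some finite $t>a$ gives (I$_a$) with $b=t$; your forward argument only yields a possibly smaller $b$, but that is all the theorem asserts. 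What your route buys is a completely elementary proof with no external input beyond Proposition~\ref{prop:semi-finite} and Corollary~\ref{cor:semi-finite}.

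Two small remarks on presentation. In the reverse direction you can apply Corollary~\ref{cor:semi-finite} directly to $f=y\1_{(a,b)}$ with $B=(-\infty,0]$ on the ambient space $\RR$; the side claim that $\mu_\sef$ restricted to $(a,b)$ equals the semi-finite part of the restricted measure is true and easy, but not actually needed. And it is worth stating explicitly that $\mu_\sef(A)=\mu(A)$ whenever $\mu(A)<\infty$ (take $F=A$ in the definition of $\mu_\sef$); this is the precise fact that underlies your building block and the telescoping computation.
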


\begin{remark}
An inspection of the proof of Theorem~\ref{thm:local} shows that if (M$_a$) holds for some finite $t > a$, then (I$_a$) holds with $b=t$. However, if (I$_a$) holds for some $b > a$, then (M$_a$) need not hold for $t=b$ (but for \emph{some} $t \in (a,b]$).
\end{remark}

The ``global'' version of our main result characterises all Borel measures on $\RR$ for which any solution to the integral inequality \eqref{eqn:thm:global:inequality} is nonpositive $\mu$-a.e.~on $\RR$.

\begin{theorem}[Global version]
\label{thm:global}
Let $\mu$ be a Borel measure on $\RR$.
The following are equivalent:
\begin{enumerate}
\item[\normalfont(M)] For each $a\in[-\infty,\infty)$, there is $t > a$ such that $\mu_\sef((a,t)) <\infty$.

\item[\normalfont(I)] For any real-valued Borel function $y$ on $\RR$ such that $\int_{(-\infty,t)} \vert y\vert\dd \mu < \infty$ for all $t \in \RR$ and
\begin{align}
\label{eqn:thm:global:inequality}
y(t)
&\leq \int_{(-\infty,t)} y \dd \mu \quad \text{for } \mu\text{-a.e.~} t \in \RR,
\end{align}
we have $y \leq 0$ $\mu$-a.e.
\end{enumerate}
In this case, $\mu_\sef$ is even $\sigma$-finite.
\end{theorem}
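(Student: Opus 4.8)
The plan is to prove Theorem~\ref{thm:global} by reducing it to the local version, Theorem~\ref{thm:local}, and then deal with the final $\sigma$-finiteness assertion separately. For the equivalence of (M) and (I): the implication (I)~$\Rightarrow$~(M) is essentially a contrapositive argument that should follow from the corresponding implication (I$_a$)~$\Rightarrow$~(M$_a$) in the local theorem, extended to cover the point $a = -\infty$ as well; one exhibits, for any $a$ violating (M$_a$), a Borel function $y$ supported on a small right-neighbourhood of $a$ that solves the inequality \eqref{eqn:thm:global:inequality} globally but is positive on a set of positive $\mu$-measure. The implication (M)~$\Rightarrow$~(I) is the substantive direction: assuming (M), I would use the local result to obtain, for each starting point, a right-neighbourhood on which nonpositivity propagates, and then I would run a ``continuation'' or exhaustion argument along $\RR$ to patch these local conclusions together into a global one.

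The key steps, in order, would be: (1) From (M), deduce that $\mu_\sef$ is $\sigma$-finite. Condition (M) says every point $a$ (including $-\infty$) has a right-neighbourhood $(a,t)$ of finite $\mu_\sef$-measure. A standard covering/connectedness argument on $\RR$ then shows $\RR$ can be written as a countable union of such finite-$\mu_\sef$-measure intervals: pick $t_{-\infty}$ with $\mu_\sef((-\infty,t_{-\infty})) < \infty$, then inductively, having covered $(-\infty, s)$, use (M$_s$) to extend past $s$; the sup of reachable points is itself reachable (again by (M) applied there, or it is $+\infty$), so an at-most-countable sequence exhausts $\RR$. This yields $\sigma$-finiteness and simultaneously organises the line into countably many ``blocks'' on which the local theorem applies. (2) Prove (M)~$\Rightarrow$~(I): given $y$ with $\int_{(-\infty,t)}|y|\dd\mu < \infty$ for all $t$ satisfying \eqref{eqn:thm:global:inequality}, I want $y \leq 0$ $\mu$-a.e. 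Let $A = \sup\{a : y \leq 0 \ \mu\text{-a.e. on } (-\infty,a)\}$ (with $A = -\infty$ if the set is empty). First one must check $A = -\infty$ is impossible: by (M$_{-\infty}$) there is $b$ with $\mu_\sef((-\infty,b)) < \infty$, and on $(-\infty,b)$ the restriction of the inequality is of exactly the form \eqref{eqn:thm:local:inequality} with $a = -\infty$ — so the local theorem (in the form noted in the remark, giving $b$ as the finiteness witness) forces $y \leq 0$ $\mu_\sef$-a.e., hence $\mu$-a.e.\ by Corollary~\ref{cor:semi-finite}, on $(-\infty,b)$; thus $A \geq b > -\infty$. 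Next, if $A < \infty$, one derives a contradiction: $y \leq 0$ $\mu$-a.e.\ on $(-\infty,A)$ (the sup is attained since the family of such $a$'s is closed under suprema — on $(-\infty, A)$ the a.e.\ bound holds because it holds on each $(-\infty, a)$, $a < A$, and these exhaust), so for $\mu$-a.e.\ $t > A$ near $A$ we have $\int_{(-\infty,t)} y\dd\mu = \int_{(-\infty,A]} y\dd\mu + \int_{(A,t)} y\dd\mu \leq \int_{(A,t)} y\dd\mu$ since the first integral over $(-\infty, A]$ is $\leq 0$ (here one also absorbs any atom at $A$, which must satisfy $y(A) \leq \int_{(-\infty,A)} y \dd\mu \leq 0$). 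Hence $y$ satisfies, on a right-neighbourhood of $A$, the inequality $y(t) \leq \int_{(A,t)} y \dd\mu$, which is \eqref{eqn:thm:local:inequality} with the starting point $A$; applying (M$_A$) and the local theorem again yields $y \leq 0$ $\mu$-a.e.\ on $(A, A + \varepsilon)$ for some $\varepsilon > 0$, contradicting the definition of $A$. Therefore $A = \infty$ and $y \leq 0$ $\mu$-a.e.\ on $\RR$. (3) Prove (I)~$\Rightarrow$~(M): if (M) fails there is some $a \in [-\infty, \infty)$ with $\mu_\sef((a,t)) = \infty$ for all $t > a$; then by the local theorem (I$_a$) fails, so for every $b > a$ there is a bad solution $y_b$ on $(a,b)$; extending $y_b$ by $0$ outside a small interval $(a,b')$ and checking it still solves \eqref{eqn:thm:global:inequality} globally (the integral over $(-\infty,t)$ only sees the zero extension plus the original, and for $t \leq a$ or large $t$ the inequality $0 \leq 0$ holds; some care is needed at the right endpoint and when $a = -\infty$, where no extension on the left is needed) produces a violation of (I).

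The main obstacle I anticipate is the ``continuation past $A$'' step: carefully handling the boundary point $A$ — whether $A$ is an atom of $\mu$, whether the supremum defining $A$ is genuinely attained $\mu$-a.e.\ on $(-\infty, A)$, and verifying that the tail integral $\int_{(-\infty, A]} y \dd\mu$ is well-defined and nonpositive so that the localised inequality with base point $A$ really does hold $\mu$-a.e.\ on a right-neighbourhood of $A$. One also has to make sure (M$_A$) is available, which it is by hypothesis (M), and that the finiteness witness $t$ from (M$_A$) can be taken small enough that $\int_{(A,t)}|y|\dd\mu < \infty$ so the local theorem's integrability hypothesis is met — this follows from $\int_{(-\infty, t)}|y|\dd\mu < \infty$. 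A secondary but routine obstacle is the bookkeeping needed to pass between $\mu$ and $\mu_\sef$ via Corollary~\ref{cor:semi-finite} at each application, which is legitimate precisely because $0$ lies in the target set $(-\infty, 0]$ and $y$ is locally $\mu$-integrable. Once these points are nailed down, the argument is a clean bootstrap from the local theorem.
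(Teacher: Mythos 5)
Your proposal is correct and follows essentially the same route as the paper: deduce $\sigma$-finiteness of $\mu_\sef$ from (M) via a covering argument (the paper's Lemma~\ref{lem:sigma finite}); prove (M)~$\Rightarrow$~(I) by working with $\mu_\sef$ (legitimised by Corollary~\ref{cor:semi-finite}), defining the supremum $t^\star$ of points up to which $y\leq 0$ holds a.e., showing $t^\star>-\infty$ via the Gronwall--Bellman lemma on $(-\infty,b)$, and then deriving a contradiction from $t^\star<\infty$ by localising the inequality to $(t^\star,t)$ (including the careful treatment of a possible atom at $t^\star$); and prove (I)~$\Rightarrow$~(M) by contraposition, lifting the bad solution from the local theorem to a global one via extension by zero. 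The only cosmetic difference is that in the continuation step you re-invoke the local Theorem~\ref{thm:local} as a black box, whereas the paper applies Lemma~\ref{lem:gronwall} directly; these are interchangeable here since (M$_a$)~$\Rightarrow$~(I$_a$) is itself proved by Lemma~\ref{lem:gronwall}. One small point worth pinning down in step (3): when you extend $y_b$ by zero, the inequality at points $t$ to the right of the support needs $0\leq\int_{(a,b')}y_b\dd\mu$, which is not automatic; you must first pass to the positive part $\max(y_b,0)$ (still a solution, cf.~Remark~\ref{rem:nonnegative}) before extending, exactly as the paper does.
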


A couple of remarks are in order.

\begin{remark}
\label{rem:nonnegative}
Statement (I) in Theorem~\ref{thm:global} is equivalent to the following statement:
\begin{enumerate}
\item[\normalfont(I$^\prime$)] For any \emph{nonnegative} Borel function $y$ on $\RR$ satisfying $\int_{(-\infty,t)}y \dd\mu < \infty$ for all $t \in \RR$ and \eqref{eqn:thm:global:inequality}, we have $y = 0$ $\mu$-a.e.
\end{enumerate}
Indeed, the implication ``(I) $\Rightarrow$ (I$^\prime$)'' is trivial. For the converse, suppose that (I$^\prime$) holds and let $y$ be as in (I). It follows from \eqref{eqn:thm:global:inequality} that the positive part $y^+ = \max(y,0)$ fulfils $y^+(t) \leq \int_{(-\infty,t)} y^+\dd\mu$ for $\mu$-a.e.~$t \in \RR$. Thus, by (I$^\prime$), $y^+ = 0$ $\mu$-a.e., which in turn gives $y \leq 0$ $\mu$-a.e.
\end{remark}

\begin{remark}
\label{rem:Borel subsets}
Theorem~\ref{thm:global} is formulated for integral inequalities on the whole real line. Statements for Borel subsets $E \subset \RR$ can easily be obtained by applying Theorem~\ref{thm:global} to the measure $\mu_E$ defined by $\mu_E(A) = \mu(A\cap E)$, $A \in \cB(\RR)$. A typical example is $E = [a,b]$ for some $a<b$ in $\RR$. In this case, if $\mu_{[a,b]}$ satisfies (M), then by Theorem~\ref{thm:global}, we have $y \leq 0$ $\mu$-a.e.~on $[a,b]$ for any real-valued Borel function $y$ satisfying $\int_{[a,b]} \vert y \vert \dd\mu < \infty$ and $y(t) \leq \int_{[a,t)} y \dd\mu$ for $\mu$-a.e.~$t \in [a,b]$.
\end{remark}

\begin{remark}
\label{rem:counter-examples}
We provide a couple of counter-examples that reject some potential weakenings of the statements (M) or (I).
\begin{enumerate}
\item If we replace ``$a \in [-\infty,\infty)$'' in (M) by ``$a\in\RR$'', then the implication ``(M) $\Rightarrow$ (I)'' breaks down. For example, let $\mu$ be the (semi-finite) Lebesgue measure on $\RR$. Then $\mu((a,a+1)) < \infty$ for each $a \in \RR$, but $y(t) = t^{-2}\1_{(-\infty,-1]}(t)$ is positive on $(-\infty,-1]$ and solves \eqref{eqn:thm:global:inequality}: 
\begin{align*}
\int_{(-\infty,t)} y \dd\mu
&= \int_{-\infty}^t s^{-2}\1_{(-\infty,-1]}(s)  \dd s
= -t^{-1}\1_{(-\infty,-1]}(t) + \1_{(-1,\infty)}(t)\\
&\geq t^{-2}\1_{(-\infty,-1]}(t)
=y(t), \quad t \in \RR.
\end{align*}

\item If in (I), we only require that the negative part $y^- = \max(-y,0)$ of $y$ is $\mu$-integrable over $(-\infty,t)$ for all $t \in \RR$, then the implication ``(M) $\Rightarrow$ (I)'' breaks down. For example, suppose that $\mu$ is the Lebesgue measure restricted to $(0,1)$. Then (M) holds, but $y(t) = \frac{1}{t}\1_{(0,1)}(t)$ is positive on $(0,1)$ and solves \eqref{eqn:thm:global:inequality} (the integral on the right-hand side is $+\infty$ for any $t > 0$).

\item If the inequality in \eqref{eqn:thm:global:inequality} is replaced by an equality, then the implication ``(I) $\Rightarrow$ (M)'' breaks down. For example, suppose that $\mu$ is the counting measure for the positive rational numbers. Assume that $y$ is as in (I) but even solves the integral \emph{equation}\footnote{If $y$ solves \eqref{eqn:rem:counter-examples:integral equation} only for $\mu$-a.e.~$t \in \RR$, then we can pass to the function $\tilde y(t) =  \int_{(-\infty,t)} y \dd \mu$, which satisfies $\tilde y = y$ $\mu$-a.e.~and solves the integral equation everywhere. The same argument then gives $\tilde y = 0$ on $\RR$ and hence $y = 0$ $\mu$-a.e.}
\begin{align}
\label{eqn:rem:counter-examples:integral equation}
y(t)
&= \int_{(-\infty,t)} y \dd \mu, \quad t \in \RR.
\end{align}
We claim that then $y = 0$ on $\RR$. By \eqref{eqn:rem:counter-examples:integral equation} and since $\mu$ is supported on $[0,\infty)$, we have $y = 0$ on $(-\infty, 0]$. Seeking a contradiction, suppose that $y(t) \neq 0$ for some $t > 0$. By Lemma~\ref{lem:integral equation}, $y$ is monotone, so $\vert y(u) \vert \geq \vert y(t) \vert > 0$ for all $u \geq t$. But then
\begin{align*}
\int_{(t,t+1)} \vert y \vert \dd\mu
&\geq \vert y(t) \vert \mu((t,t+1))
= \infty,
\end{align*}
contradicting the integrability of $y$. Therefore, $y = 0$ is the only solution to \eqref{eqn:rem:counter-examples:integral equation}. However, the counting measure $\mu$ is semi-finite and $\mu((a,t)) = \infty$ for all $0 \leq a < t$, so that (M) fails.
\end{enumerate}
\end{remark}

\paragraph{Nonexistence of bounds for solutions to inhomogeneous linear integral inequalities.}
Gronwall--Bellman type inequalities (we refer to \cite{BainovSimeonov1992} for an overview) establish, under certain sufficient conditions, upper bounds for solutions $y$ to various \emph{inhomogeneous} linear integral inequalities such as
\begin{align}
\label{eqn:rem:inhomogeneous:integral inequality}
y(t)
&\leq f(t) + \int_{(-\infty,t)} y \dd\mu\quad\text{for }\mu\text{-a.e.~}t \in \RR.
\end{align}
In other words, these results provide a real-valued function $b$ (depending only on $f$ and $\mu$) such that any solution to \eqref{eqn:rem:inhomogeneous:integral inequality} satisfies $y \leq b$ $\mu$-a.e.

One consequence of our main result is that if condition (M) fails, then no such bound can exist (except in the trivial case that no solution to \eqref{eqn:rem:inhomogeneous:integral inequality} exists). Indeed, in the case that (M) fails, there is by Theorem~\ref{thm:global} a real-valued solution $\tilde y$ to \eqref{eqn:thm:global:inequality} which is positive with positive $\mu$-measure. Let $y$ be a solution to \eqref{eqn:rem:inhomogeneous:integral inequality}. Then by linearity, for each $n\in\NN$, the function $y_n = y + n \tilde y$ also solves the inhomogeneous linear integral inequality \eqref{eqn:rem:inhomogeneous:integral inequality}, but $\lim_{n\to\infty} y_n = \infty$ on the set where $\tilde y$ is positive.

%=================================================================================
\section{Auxiliary results}
%=================================================================================
\label{sec:auxiliary results}

The proofs of Theorems~\ref{thm:local} and \ref{thm:global} make use of several lemmas, which we state and prove in this section. We begin with a simple sufficient criterion for $\sigma$-finiteness of a Borel measure on $\RR$.  We provide a full proof because we were unable to find a reference.

\begin{lemma}
\label{lem:sigma finite}
Let $\mu$ be a Borel measure on $\RR$. Suppose that for every $a \in \RR$, there is $\varepsilon>0$ such that $\mu([a,a+\varepsilon)) <\infty$. Then $\mu$ is $\sigma$-finite.
\end{lemma}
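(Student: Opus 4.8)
\emph{Proof idea.} The plan is to reduce to compact intervals and then run a ``maximal $\sigma$-finite prefix'' argument on each of them. It suffices to show that $\mu$ restricted to every compact interval $[c,d]$ with $c<d$ is $\sigma$-finite, because $\RR=\bigcup_{N\in\NN}[-N,N]$ then presents $\RR$ as a countable union of such pieces. Throughout I will use two elementary facts about the property ``$A$ can be covered by countably many Borel sets of finite $\mu$-measure'': (i) it passes to Borel subsets of $A$ (intersect the cover with the subset), and (ii) it is preserved under countable unions (concatenate the covers). Note that ``$\mu$ restricted to $A$ is $\sigma$-finite'' is exactly this property for $A$.

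Fix $c<d$. For every $a\in[c,d)$ the hypothesis provides $\varepsilon(a)>0$ with $\mu([a,a+\varepsilon(a)))<\infty$, and after shrinking we may take $a+\varepsilon(a)\le d$; applied at $a=d$, the hypothesis gives $\mu(\{d\})<\infty$. Let $T$ be the set of $b\in[c,d]$ for which $[c,b)$ admits a countable cover by finite-measure Borel sets, and set $m:=\sup T$. Then $c\in T$ (as $[c,c)=\emptyset$), and $T$ is downward closed in $[c,d]$: if $b\in T$ and $c\le b'\le b$, then $[c,b')\subseteq[c,b)$, so $b'\in T$ by (i); consequently $[c,m)\subseteq T$. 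Moreover $c+\varepsilon(c)\in T$ since $[c,c+\varepsilon(c))$ has finite measure, so $m>c$.

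The heart of the argument is to show $m\in T$ and then $m=d$. Since $[c,m)\subseteq T$, we may write $[c,m)=\bigcup_{n=1}^\infty[c,m-1/n)$ (reading $[c,x)=\emptyset$ when $x\le c$), a countable union of sets in the class, so $[c,m)$ is in the class by (ii); together with $\mu(\{m\})<\infty$ — valid by the hypothesis at $a=m$ when $m<d$, and at $a=d$ when $m=d$ — this yields that $[c,m]=[c,m)\cup\{m\}$ is in the class, i.e.\ $m\in T$. Now suppose, for contradiction, that $m<d$. The hypothesis at $a=m$ gives $\mu([m,m+\varepsilon(m)))<\infty$ with $m+\varepsilon(m)\le d$, so $[c,m+\varepsilon(m))=[c,m)\cup\{m\}\cup(m,m+\varepsilon(m))$ is in the class, whence $m+\varepsilon(m)\in T$ — contradicting $m=\sup T$. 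Hence $m=d$, so $[c,d]$ is $\sigma$-finite, and the reduction from the first paragraph completes the proof.

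The one genuine subtlety is the invariant chosen for the supremum argument. The naive attempt to build points $t_0<t_1<\cdots$ with $\mu([t_n,t_{n+1}))<\infty$ can fail to exhaust $[c,d]$: $\mu$ may pile up infinite mass just to the left of a point while still being $\sigma$-finite there (for instance $\sum_{k\ge1}2^{k}\delta_{1-1/k}$ is $\sigma$-finite and satisfies the hypothesis, yet no $[t_n,t_{n+1})$ of finite measure reaches past $1$). Replacing ``finite measure'' by the weaker ``countable cover by finite-measure sets'' is precisely what lets both the limit step ($[c,m)$ is in the class) and the successor step ($[c,m+\varepsilon(m))$ is in the class) go through.
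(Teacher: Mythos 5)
Your proof is correct and follows essentially the same strategy as the paper's: reduce $\RR$ to a countable union of intervals, then on each piece run a supremum argument for the ``$\sigma$-finite prefix,'' showing the supremum is attained (limit step via countable unions) and cannot be interior (successor step via the hypothesis). The only cosmetic difference is that you work on compact intervals $[c,d]$ while the paper works on half-lines $[K,\infty)$ and allows the supremum $t^\star$ to be $+\infty$; both handle the point mass at the supremum and the extension past it identically.
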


\begin{proof}
Using that $\RR$ can be covered by countably many sets of the form $[K,\infty)$, it suffices to show that $\mu$ is $\sigma$-finite on $[K,\infty)$ for each $K \in \RR$. So fix $K\in\RR$ and consider
\begin{align*}
t^\star
&= \sup \lbrace t \in[K,\infty]: \mu \text{ is $\sigma$-finite on } [K,t) \rbrace \in [K,\infty].
\end{align*}
By assumption, $\mu$ is finite on $[K, K+\varepsilon)$ for some $\varepsilon > 0$. Hence, $t^\star > K$ and there is a sequence $(t_n)_{n\in\NN}\subset [K,t^\star]$ increasing to $t^\star$ such that for each $n\in\NN$, $\mu$ is $\sigma$-finite on $[K,t_n)$. We can thus choose, for each $n\in\NN$, a sequence $(E^n_i)_{i \in \NN}$ of Borel sets such that $\bigcup_{i \in \NN} E^n_i = [K,t_n)$ and $\mu(E^n_i) < \infty$ for all $i \in \NN$. Noting that the countable collection $(E^n_i)_{i,n\in\NN}$ covers $[K,t^\star)$, we conclude that $\mu$ is $\sigma$-finite on $[K,t^\star)$. Seeking a contradiction, suppose that $t^\star < \infty$. Then by assumption, there is $\varepsilon > 0$ such that $\mu$ is finite on $[t^\star, t^\star + \varepsilon)$, so that $\mu$ is $\sigma$-finite even on $[K,t^\star+\varepsilon)$. This contradicts the definition of $t^\star$. Thus, $t^\star = \infty$ and $\mu$ is $\sigma$-finite on $[K,\infty)$.
\end{proof}

For further reference, we state a special case of the Gronwall--Bellman lemma for Borel measures on $\RR$. It follows from \cite[Theorem~3.1]{Horvath1996}.

\begin{lemma}
\label{lem:gronwall}
Let $\mu$ be a Borel measure on $\RR$ and fix $-\infty\leq a<b\leq \infty$. Suppose that $\mu((a,t)) <\infty$ for each $t \in (a,b)$. If $y:(a,b)\to\RR$ is $\mu$-integrable over $(a,t)$ for each $t \in (a,b)$ and
\begin{align}
\label{eqn:lem:gronwall:integral inequality}
y(t)
&\leq \int_{(a,t)} y \dd \mu \quad\text{for $\mu$-a.e.~}t \in (a,b),
\end{align}
then $y \leq 0$ $\mu$-a.e.~on $(a,b)$. More precisely, for each $t\in(a,b)$, $y(t) \leq 0$ whenever the inequality \eqref{eqn:lem:gronwall:integral inequality} holds.
\end{lemma}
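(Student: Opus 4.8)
The plan is to give a short self-contained proof rather than to unpack \cite[Theorem~3.1]{Horvath1996}. First I would reduce to the case of a nonnegative function with a nondecreasing primitive. Given $y$ as in the statement, its positive part $y^+=\max(y,0)$ satisfies $y^+(t)\le\int_{(a,t)}y\dd\mu\le\int_{(a,t)}y^+\dd\mu$ for $\mu$-a.e.\ $t\in(a,b)$: the first inequality holds because $\int_{(a,t)}y^+\dd\mu\ge 0$ and $y$ satisfies \eqref{eqn:lem:gronwall:integral inequality}, and the second because $y\le y^+$. Hence $Y(t):=\int_{(a,t)}y^+\dd\mu$, which is finite since $y$ is $\mu$-integrable over $(a,t)$, is nondecreasing, dominates $y^+$ $\mu$-a.e., and therefore satisfies the \emph{pointwise} inequality $Y(t)=\int_{(a,t)}y^+\dd\mu\le\int_{(a,t)}Y\dd\mu$ for \emph{every} $t\in(a,b)$ (monotonicity of the integral absorbs the $\mu$-null exceptional set). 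If I can show that $Y\equiv 0$ on $(a,b)$, then monotone convergence as $t\uparrow b$ gives $\int_{(a,b)}y^+\dd\mu=0$, so $y\le 0$ $\mu$-a.e.\ on $(a,b)$; moreover, for any individual $t$ at which \eqref{eqn:lem:gronwall:integral inequality} holds we get $y(t)\le\int_{(a,t)}y\dd\mu\le\int_{(a,t)}y^+\dd\mu=Y(t)=0$, which is the ``more precisely'' assertion.

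So everything reduces to the elementary claim that if $G\colon(a,b)\to[0,\infty)$ is nondecreasing and satisfies $G(t)\le\int_{(a,t)}G\dd\mu$ for all $t\in(a,b)$, with $\mu((a,t))<\infty$ for every $t<b$, then $G\equiv 0$. To prove this I would set $t_0:=\inf\{t\in(a,b):G(t)>0\}$ (with $\inf\emptyset=b$) and rule out the possibility $t_0<b$. By definition of $t_0$, $G$ vanishes on $(a,t_0)$; and if $t_0>a$ then $G(t_0)\le\int_{(a,t_0)}G\dd\mu=0$, so $G$ vanishes on $(a,t_0]$ in all cases. Since $\bigcap_{t>t_0}(t_0,t)=\emptyset$ and $\mu$ is finite on $(t_0,t)\subset(a,t)$ for $t<b$, continuity from above gives $\mu((t_0,t))\to 0$ as $t\downarrow t_0$, so I may fix $t_1\in(t_0,b)$ with $c:=\mu((t_0,t_1))<1$. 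Put $S:=\sup\{G(t):t\in(t_0,t_1)\}$, which is finite because $G$ is nondecreasing and $t_1<b$. For $t\in(t_0,t_1)$, since $G$ vanishes on $(a,t_0]$, $G(t)\le\int_{(a,t)}G\dd\mu=\int_{(t_0,t)}G\dd\mu\le S\,\mu((t_0,t))\le cS$. Taking the supremum over $t\in(t_0,t_1)$ yields $S\le cS$, hence $S=0$, i.e.\ $G\equiv 0$ on $(t_0,t_1)$ --- contradicting that $t_0$ is the infimum of $\{t:G(t)>0\}$, since $G(t_0)=0$ forces points of that set to accumulate at $t_0$ from the right. Therefore $t_0=b$ and $G\equiv 0$.

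I do not expect a deep obstacle; the only real care is bookkeeping. The hypothesis $\mu((a,t))<\infty$ enters precisely to make $\mu((t_0,t))\to 0$, which is what turns the integral inequality into a genuine contraction $S\le cS$ near $t_0$; monotonicity of $G$ keeps $S$ finite so that $S\le cS$ forces $S=0$; and $G\ge 0$ is needed both for $Y$ to be monotone and for the sign argument $S\le cS\Rightarrow S=0$. The boundary cases are routine: if $a=-\infty$ or $b=\infty$ nothing changes (with $(t_0,t_1)$ read as $(-\infty,t_1)$ when $t_0=-\infty$), and if $t_0=a$ then $(a,t_0]$ is empty and there is nothing to check. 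The one point that must not be fumbled is working with the \emph{open} interval $(t_0,t_1)$, whose $\mu$-mass can be made $<1$ even when $\mu$ has a large atom at $t_0$; this is exactly what makes the argument valid for arbitrary Borel measures and not merely atomless ones. Notably, this route never invokes the Gronwall--Bellman machinery of \cite{Horvath1996}: it uses only that a measure assigning small mass to short intervals cannot support a positive solution.
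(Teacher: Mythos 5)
Your argument is correct and takes a genuinely different route from the paper: the paper does not prove Lemma~\ref{lem:gronwall} itself but derives it from \cite[Theorem~3.1]{Horvath1996}, whereas you give a short, self-contained, elementary proof. The structure---reduce to $y^+$, replace $y^+$ by the nondecreasing primitive $Y(t)=\int_{(a,t)}y^+\dd\mu$ so that the integral inequality becomes pointwise, then locate the ``first time of positivity'' $t_0$ and run a contraction on an open right-neighbourhood $(t_0,t_1)$ chosen so that $\mu((t_0,t_1))<1$---is sound, and the points you flag as needing care (finiteness of $S$ via monotonicity, using the \emph{open} interval so that an atom at $t_0$ is harmless, the boundary cases $t_0=a$ and $a=-\infty$) are exactly the right ones and do check out. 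This buys a self-contained proof that exposes precisely where the hypothesis $\mu((a,t))<\infty$ enters (through $\mu((t_0,t))\downarrow 0$ by continuity from above), at the cost of a page of bookkeeping versus a one-line citation.

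One slip in the first reduction should be corrected. The chain $y^+(t)\le\int_{(a,t)}y\dd\mu\le\int_{(a,t)}y^+\dd\mu$ is not valid as written: the first link fails whenever $y(t)\le 0$ and $\int_{(a,t)}y\dd\mu<0$, since then $y^+(t)=0$ while the middle term is strictly negative (take $\mu$ Lebesgue on $(0,1)$ and $y\equiv-1$ for a concrete instance in which the link fails on a set of full measure). The justification offered (``because $\int_{(a,t)}y^+\dd\mu\ge 0$'') refers to a quantity that does not appear in that link. The intended conclusion $y^+(t)\le\int_{(a,t)}y^+\dd\mu$ for $\mu$-a.e.\ $t$ is nonetheless correct, but it should be proved by a case split rather than as a chain: if $y(t)>0$ then $y^+(t)=y(t)\le\int_{(a,t)}y\dd\mu\le\int_{(a,t)}y^+\dd\mu$, while if $y(t)\le 0$ then $y^+(t)=0\le\int_{(a,t)}y^+\dd\mu$. (This is exactly the reduction the paper itself uses in Remark~\ref{rem:nonnegative}.) With that local repair, your proof is complete.
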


%\begin{proof}
%By \cite[Theorem~3.1~(a)]{Horvath1996} and the assumptions on $\mu$ (in particular, $1$ is $\mu$-integrable over $(a,t)$ for each $t \in (a,b)$), the homogeneous linear integral equality $s(t) = \int_{(a,t)} s \dd\mu$, $t \in (a,b)$, has the unique solution $s = 0$ . Then, by \cite[Theorem~3.1~(d)]{Horvath1996} and the assumptions on $y$, for each $t \in (a,b)$, $y(t) \leq 0$ whenever $y(t) \leq \int_{(a,t)} y \dd \mu$.
%\end{proof}

Next, we construct a positive solution to the integral equation
\begin{align*}
y(t)
&= \int_{(a,t)}y\dd\mu,\quad t \in (a,b),
\end{align*}
in the special case where $\mu$ has a ``singularity'' only at the left endpoint of some interval $(a,b)$, i.e., $\mu((t,b)) = \infty$ if and only if $t = a$. This is the crucial ingredient for the implication ``(I$_a$) $\Rightarrow$ (M$_a$)'' of Theorem~\ref{thm:local} (which we prove by contraposition).

\begin{lemma}
\label{lem:construction}
Let $\mu$ be a Borel measure on $\RR$ and $-\infty\leq a<b < \infty$ such that for $t \in [a,b)$, $\mu((t,b)) = \infty$ if and only if $t = a$. Then there is a positive Borel function $y:(a,b)\to(0,1]$ such that 
\begin{align*}
y(t)
&= \int_{(a,t)}y \dd\mu, \quad t \in (a,b),
\quad \text{and}\quad
\lim_{t \uparrow\uparrow b} y(t) = 1.
\end{align*}
In particular, $y$ is nondecreasing, $\mu$-integrable over $(a,b)$, and satisfies $\lim_{t\downarrow\downarrow a} y(t) = 0$.
\end{lemma}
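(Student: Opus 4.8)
The plan is to construct $y$ from the multiplicative exponential (resolvent) of $\mu$. By hypothesis $\mu((a,b)) = \infty$ while $\mu((t,b)) < \infty$ for every $t \in (a,b)$; since $[s,b) \subset (s',b)$ for any $s' \in (a,s)$, this gives $\mu([s,b)) < \infty$ for each $s \in (a,b)$, so $\mu$ restricted to $[s,b)$ is a finite measure. For fixed $s\in(a,b)$ and $u \in [s,b)$ put $I_0(u) = 1$ and, for $j\ge1$, let $I_j(u)$ be the value under the $j$-fold product of $\mu|_{[s,b)}$ of the ordered simplex $\{(v_1,\dots,v_j) : s\le v_1<\dots<v_j<u\}$. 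The $j!$ coordinate-permutations of this simplex are pairwise disjoint subsets of $[s,u)^j$, so $I_j(u) \le \mu([s,u))^j/j!$, and Fubini gives $\int_{[s,u)} I_j \dd\mu = I_{j+1}(u)$. Hence $E(s,\cdot) := \sum_{j\ge0} I_j$ is finite (indeed $1 \le E(s,u) \le e^{\mu([s,u))}$), nondecreasing in $u$, satisfies $E(s,u) \ge 1 + \mu([s,u))$, and, by monotone convergence, solves the integral equation $E(s,u) = 1 + \int_{[s,u)} E(s,\cdot)\dd\mu$ on $[s,b)$. (Equivalently, $E(s,\cdot)$ is the increasing limit of the Picard iterates $w_0\equiv1$, $w_{k+1}(u) = 1 + \int_{[s,u)} w_k\dd\mu$.)

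Two features of $E$ do the work. First, $E(s,t) = E(s,u)\,E(u,t)$ for $a<s\le u\le t<b$: both $t'\mapsto E(s,t')$ and $t'\mapsto E(s,u)E(u,t')$ are bounded solutions of $w(t') = E(s,u) + \int_{[u,t')} w\dd\mu$ on $[u,b)$, so they agree — by the uniqueness part of the Gronwall--Bellman Lemma~\ref{lem:gronwall} applied to their difference and its negative (writing $\int_{[u,\cdot)}\bullet\dd\mu = \int_{(u^-,\cdot)}\bullet\dd\nu$ with $\nu(A) = \mu(A\cap[u,\infty))$), or directly from the simplex estimate. Second, for each fixed $t\in(a,b)$, $\lim_{s\downarrow a} E(s,t) = \infty$: since $(a,b) = (a,t)\sqcup[t,b)$ with $\mu((a,b)) = \infty$ and $\mu([t,b)) < \infty$, we have $\mu((a,t)) = \infty$, and $E(s,t) \ge 1 + \mu([s,t)) \uparrow 1 + \mu((a,t)) = \infty$ as $s\downarrow a$.

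Now fix $c\in(a,b)$ and set $y(t) = E(c,t)$ for $t\in[c,b)$ and $y(t) = E(t,c)^{-1}$ for $t\in(a,c]$; the two prescriptions agree at $c$, where $E(c,c)=1$. The multiplicative identity gives $y(t) = y(s)\,E(s,t)$ for all $a<s\le t<b$ (three cases depending on the position of $s,t$ relative to $c$), so $y$ is positive and nondecreasing, and combining this with the equation for $E(s,\cdot)$ yields $y(t) = y(s) + \int_{[s,t)} y\dd\mu$ for $a<s\le t<b$. Letting $s\downarrow a$: the monotone limit $\ell := \lim_{s\downarrow a} y(s)\in[0,\infty)$ exists, and monotone convergence gives $y(t) = \ell + \int_{(a,t)} y\dd\mu$ for $t\in(a,b)$. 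If $\ell>0$ then $y(t)\ge\int_{[s,t)} y\dd\mu\ge\ell\,\mu([s,t))\to\infty$ as $s\downarrow a$, contradicting $y(t)<\infty$; hence $\ell=0$, so $y(t) = \int_{(a,t)} y\dd\mu$ on $(a,b)$ and $\lim_{t\downarrow\downarrow a} y(t) = 0$. Finally $C := \lim_{t\uparrow\uparrow b} y(t) = \sup_{t\in(a,b)} y(t)$ exists in $[1,\infty)$ because $C = \lim_{t\uparrow b} E(c,t) \le e^{\mu([c,b))} < \infty$; replacing $y$ by $y/C$ preserves all of the above (by linearity) and produces a nondecreasing $y\colon(a,b)\to(0,1]$ with $\lim_{t\uparrow\uparrow b} y(t) = 1$, which is $\mu$-integrable over $(a,b)$ since $\int_{(a,b)} y\dd\mu = \lim_{t\uparrow b}\int_{(a,t)} y\dd\mu = \lim_{t\uparrow b} y(t) = 1$.

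The main obstacle is the construction of the resolvent $E$: its finiteness (which is exactly where $\mu([s,t))<\infty$ enters, via the factorial bound on the ordered simplices), the multiplicative semigroup property (resting on uniqueness and on matching half-open with open integration intervals), and the lower bound $E(s,t)\ge1+\mu([s,t))$ that forces the blow-up as $s\downarrow a$. Once $E$ is available, the gluing and the limit $s\downarrow a$ are routine applications of monotone convergence. The endpoint case $a=-\infty$ needs no separate treatment: the only properties of $a$ used are $\mu((t,b))<\infty$ for $t>a$ and $\mu((a,b))=\infty$.
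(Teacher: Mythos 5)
Your proof is correct, and it takes a genuinely different route from the paper's. The paper leans on Horv\'ath's existence theorem \cite[Theorem~3.1~(a)]{Horvath1996} to produce a solution $y$ of a backward integral equation of the form $y(t)=\tfrac{1}{1+\mu(\{t\})}\bigl(1-\int_{(t,b)}y\dd\mu\bigr)$, then deduces $y(b-)=1$ by dominated convergence, and establishes positivity by an indirect argument: it locates $t^\star=\sup\{t:y(t)\le0\}$, squeezes $y(t^\star)=0$ via one-sided limits that handle atoms, and then applies the Gronwall--Bellman lemma on $(t^\star,b)$ to reach a contradiction with $y(b-)=1$. You instead build the solution explicitly as a normalized ordered exponential (resolvent) $E(s,t)=\sum_{j\ge0}\mu^{\otimes j}\bigl(\{s\le v_1<\cdots<v_j<t\}\bigr)$, which is manifestly $\ge1$ and nondecreasing in $t$, derive the semigroup identity $E(s,t)=E(s,u)E(u,t)$ by splitting the ordered simplex at $u$ (this partition by how many coordinates fall in $[s,u)$ correctly accounts for atoms at $u$), glue $E(c,\cdot)$ and $E(\cdot,c)^{-1}$ at a fixed $c\in(a,b)$, and pass to the limit $s\downarrow a$ using the lower bound $E(s,t)\ge1+\mu([s,t))\to\infty$ to kill the additive constant. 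Your approach buys self-containedness and constructiveness --- positivity and monotonicity are immediate from the series and the semigroup law rather than extracted by contradiction --- at the cost of verifying the factorial bound, the product formula, and the gluing; the paper's approach is shorter conditional on citing Horv\'ath but requires the more delicate sign argument. Two small gaps you should close explicitly: (i) Borel measurability of the glued $y$ (immediate, since $y$ is monotone by the semigroup identity); and (ii) in the semigroup step, state that the $j$-fold simplex $\{s\le v_1<\cdots<v_j<t\}$ is the \emph{disjoint} union over $k=0,\dots,j$ of products of $k$- and $(j-k)$-simplices, so that $I_j^{s,t}=\sum_{k=0}^jI_k^{s,u}I_{j-k}^{u,t}$ and the product formula follows by the Cauchy product of absolutely convergent series. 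Neither is a real obstruction.
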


\begin{proof}
We start by constructing a candidate function. It follows from \cite[Theorem~3.1~(a)]{Horvath1996} that there is a function $y:(a,b)\to\RR$ such that $\int_{(t,b)}\vert y \vert \dd\mu < \infty$ for each $t \in (a,b)$ and
\begin{align}
\label{eqn:lem:construction:pf:y:1}
y(t)
&= \frac{1}{1+\mu(\lbrace t \rbrace)} - \frac{1}{1+\mu(\lbrace t \rbrace)} \int_{(t,b)} y \dd \mu, \quad t\in(a,b).
\end{align}
More precisely, in the notation of \cite{Horvath1996}, we take $X = (a,b)$, $\cA$ the Borel $\sigma$-algebra on $(a,b)$, $\mu$ the restriction of ``our'' $\mu$ to $(a,b)$, and $S(x) = (x,b)$, $x \in (a,b)$. Then the function $S: X \to \cA$ satisfies condition (C) of \cite[Section 2]{Horvath1996}. The functions $f$ and $g$ of \cite[Theorem~3.1]{Horvath1996} are in our case given by $f(x) = -g(x) = 1/(1+\mu(\lbrace x \rbrace))$, $x \in (a,b)$. So $\vert f (x)\vert = \vert g(x)\vert \leq 1$, $x \in (a,b)$, and $1$ is $\mu$-integrable over $S(x) = (x,b)$ for each $x \in (a,b)$ by assumption. Then $f$ and $g$ are $\mu$-integrable provided that they are Borel measurable.\footnote{We thank the referee for pointing out and providing a proof for this measurability requirement.} This is true if $x \mapsto \mu(\lbrace x \rbrace)$, $x \in (a,b)$, is Borel measurable. So let $(t_n)_{n \in \NN} \subset (a,b)$ be a sequence decreasing to $a$. Then for any $\varepsilon > 0$, the sets $A^\varepsilon_n = \lbrace x \in (t_n, b) : \mu(\lbrace x \rbrace) > \varepsilon \rbrace$, $n \in \NN$, are finite since $\mu((t,b)) < \infty$ for $t \in (a,b)$ by assumption. It follows that $A^\varepsilon = \lbrace x \in (a,b) : \mu(\lbrace x \rbrace) > \varepsilon \rbrace = \bigcup_{n \in \NN} A^\varepsilon_n$ is a countable set, and hence it is Borel measurable. This implies that $\lbrace x \in (a,b) : \mu(\lbrace x \rbrace) > 0 \rbrace = \bigcup_{n \in \NN} A^{1/n}$ is also Borel measurable. We conclude that $x \mapsto \mu(\lbrace x \rbrace)$, $x \in (a,b)$, is Borel measurable.

Multiplying \eqref{eqn:lem:construction:pf:y:1} by $(1+\mu(\lbrace t \rbrace))$ and then subtracting $y(t)\mu(\lbrace t \rbrace)$ on both sides yields
\begin{align}
\label{eqn:lem:construction:pf:y:2}
y(t)
&= 1 - \int_{(t,b)} y \dd \mu - y(t) \mu(\lbrace t \rbrace)
= 1- \int_{[t,b)} y \dd \mu, \quad t\in(a,b).
\end{align}

We now show that $y$ has the asserted properties. First, letting $t$ increase to $b$ in \eqref{eqn:lem:construction:pf:y:2} and using dominated convergence gives $y(b-)=\lim_{t \uparrow\uparrow b} y(t)= 1$.

Second, we show that $y$ is positive. Seeking a contradiction, suppose that $y(t) \leq 0$ for some $t \in (a,b)$. Then $t^\star
:= \sup \lbrace t \in (a,b) : y(t) \leq 0 \rbrace \in (a,b)$ (note that $y(b-) = 1$ implies $t^\star < b$).
Let $(t_n)_{n\in\NN} \subset (a,t^\star]$ be a sequence increasing to $t^\star$ such that $y(t_n) \leq 0$ for all $n\in\NN$. We claim that $y(t^\star) \leq 0$. If $t_n = t^\star$ for some $n\in\NN$, then there is nothing to show. So suppose that $t_n < t^\star$ for all $n\in\NN$. Then by \eqref{eqn:lem:construction:pf:y:2}, the dominated convergence theorem (recall the integrability of $y$), and \eqref{eqn:lem:construction:pf:y:1},
\begin{align}
\label{eqn:lem:construction:pf:ystar:nonpositive}
y(t^\star)
=1-\int_{[t^\star, b)} y \dd\mu 
= 1 -\lim_{n\to\infty} \int_{(t_n, b)} y \dd\mu
= \lim_{n\to\infty} \big(1+\mu(\lbrace t_n \rbrace)\big)y(t_n)
\leq 0.
\end{align}
Similarly, we have $y(t^\star) \geq 0$ because
\begin{align}
\label{eqn:lem:construction:pf:ystar:nonnegative}
y(t^\star)\big(1+\mu(\lbrace t^\star \rbrace)\big)
&= 1- \int_{(t^\star,b)} y \dd \mu
= 1-\lim_{\varepsilon \downarrow\downarrow 0} \int_{[t^\star + \varepsilon, b)} y \dd \mu 
= \lim_{\varepsilon \downarrow\downarrow 0} y(t^\star + \varepsilon)
\geq 0,
\end{align}
where we use the definition of $t^\star$ in the last inequality. So $y(t^\star) = 0$ by \eqref{eqn:lem:construction:pf:ystar:nonpositive}--\eqref{eqn:lem:construction:pf:ystar:nonnegative}. As this also implies $\int_{[t^\star,b)} y \dd\mu = 1$ by \eqref{eqn:lem:construction:pf:y:2}, we infer from \eqref{eqn:lem:construction:pf:y:2} that
\begin{align*}
y(t)
&= 1 - \int_{[t,b)} y \dd\mu
= \int_{[t^\star,b)} y \dd\mu - \int_{[t,b)} y \dd\mu
= \int_{[t^\star,t)} y \dd\mu
= \int_{(t^\star,t)} y \dd\mu, \quad t\in (t^\star,b).
\end{align*}
But $\mu((t^\star, b)) < \infty$ by the assumption on $\mu$ (recall that $t^\star > a$). Hence, the Gronwall--Bellman lemma (Lemma~\ref{lem:gronwall}) gives $y \leq 0$ on $(t^\star,b)$. This contradicts the fact that $y(b-) = 1$. We conclude that $y$ is positive.

Third, we prove the remaining properties. The positivity of $y$ together with \eqref{eqn:lem:construction:pf:y:2} shows that $y$ is nondecreasing and has range $(0,1]$. Hence, the limit $y(a+)=\lim_{t \downarrow\downarrow a} y(t)$ exists in $[0,1]$. Now by \eqref{eqn:lem:construction:pf:y:2},
\begin{align}
\label{eqn:lem:construction:pf:convergence}
1-y(t)
&= \int_{[t,b)} y \dd\mu \geq y(a+) \mu([t,b)),\quad t\in(a,b).
\end{align}
Letting $t \downarrow \downarrow a$ in \eqref{eqn:lem:construction:pf:convergence} and using that $\mu((a,b)) = \infty$, we conclude that $y(a+) = 0$.
\end{proof}

Suppose that a Borel measure $\mu$ admits an $a \in [-\infty,\infty)$ such that $\mu((a,t)) = \infty$ for all $t > a$. If $\mu$ is semi-finite, the following lemma constructs a Borel set $E \subset (a,\infty)$ such that the measure $\mu_E$ defined by $\mu_E(A) = \mu(A\cap E)$, $A \in \cB(\RR)$, satisfies the assumptions of Lemma~\ref{lem:construction} (for any $b > a$).

\begin{lemma}
\label{lem:subset}
Let $\mu$ be a semi-finite Borel measure on $\RR$. Suppose that there is $a \in [-\infty,\infty)$ such that $\mu((a,t)) = \infty$ for all $t > a$. Then there is a Borel set $E \subset (a, \infty)$ such that the Borel measure $\mu_E$ defined by $\mu_E(A) = \mu(A\cap E)$, $A \in \cB(\RR)$, has the following property:
\begin{align*}
\text{For each }b\in(a,\infty)\text{ and }t\in[a,b),~\mu_E((t,b)) = \infty \text{ if and only if }t = a.
\end{align*}
\end{lemma}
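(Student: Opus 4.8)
The plan is to realise $E$ as a countable disjoint union of finite‑measure pieces that accumulate at $a$. Fix once and for all some $b_0\in(a,\infty)$ and build $E\subset(a,b_0)$; this suffices. Indeed, since $E\subset(a,b_0)$, for $b\in(a,\infty)$ and $t\in(a,b)$ we have $\mu_E((t,b))=\mu(E\cap(t,b))\le\mu(E\cap(t,b_0))$ when $t<b_0$ and $\mu_E((t,b))=0$ when $t\ge b_0$; and $\mu_E((a,b))=\mu(E\cap(a,b))\ge\mu(E\cap(a,b'))$ for any $b'\in(a,\min(b,b_0))$. So it is enough to arrange
\[
\text{(i) } \mu(E\cap(t,b_0))<\infty\ \ \forall\, t\in(a,b_0),\qquad\text{(ii) } \mu(E\cap(a,b))=\infty\ \ \forall\, b\in(a,b_0).
\]

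The set $E$ is built recursively together with a strictly decreasing sequence $(s_n)_{n\ge0}$ with $s_0=b_0$ and $s_n\to a$. The crucial point is that one \emph{cannot} use a fixed grid, because the mass $\mu((s_{n+1},s_n))$ of an annulus may be finite or even zero; instead, at each step one extracts a finite‑mass subset of $(a,s_n)$ carrying enough mass and then truncates it. Precisely, suppose $s_n$ has been chosen. Since $\mu((a,s_n))=\infty$ and $\mu$ is semi-finite, Proposition~\ref{prop:semi-finite}~(c) yields a Borel set $F_n\subset(a,s_n)$ with $2<\mu(F_n)<\infty$. As $s'\downarrow a$, the sets $F_n\cap(s',s_n)$ increase to $F_n$, so by continuity of $\mu$ from below $\mu(F_n\cap(s',s_n))\to\mu(F_n)>2$; hence there is $c_n\in(a,s_n)$ with $\mu(F_n\cap(s',s_n))\ge1$ for all $s'\in(a,c_n)$. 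Pick $s_{n+1}\in(a,c_n)$ small enough that also $s_n\to a$ is forced (e.g.\ $s_{n+1}<\tfrac12(a+s_n)$ if $a$ is finite, or $s_{n+1}<s_n-1$ if $a=-\infty$), and set $E_n:=F_n\cap(s_{n+1},s_n)$, so $1\le\mu(E_n)<\infty$. Finally put $E:=\bigcup_{n\ge0}E_n$, a Borel subset of $(a,b_0)$, and let $\mu_E$ be as in the statement.

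It remains to check (i) and (ii). The intervals $(s_{n+1},s_n)$ are pairwise disjoint, hence so are the $E_n$. For (ii), given $b\in(a,b_0)$ choose $N$ with $s_N<b$; then $\bigsqcup_{n\ge N}E_n\subset(a,s_N)\subset(a,b)$, so $\mu(E\cap(a,b))\ge\sum_{n\ge N}\mu(E_n)\ge\sum_{n\ge N}1=\infty$. For (i), given $t\in(a,b_0)$ choose $N$ with $s_N\le t$; for $n\ge N$ one has $E_n\subset(s_{n+1},s_n)\subset(a,s_N]\subset(a,t]$, hence $E_n\cap(t,b_0)=\emptyset$, and therefore $\mu(E\cap(t,b_0))\le\sum_{n=0}^{N-1}\mu(E_n)<\infty$. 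This proves the asserted property of $\mu_E$.

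The main obstacle is exactly the one flagged above: only $\mu((a,t))=\infty$ is available, not any spreading‑out of the mass, so a naive partition of $(a,b_0)$ into intervals need not yield finite‑mass pieces. Semi-finiteness (via Proposition~\ref{prop:semi-finite}~(c)) must be invoked at each stage to produce a finite‑mass set of controlled size, and only afterwards is it truncated to an annulus; the remaining arguments are routine disjointness and continuity‑from‑below bookkeeping.
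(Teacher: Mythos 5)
Your proof is correct, but it takes a genuinely different route from the paper's. The paper's construction first fixes a bi-infinite grid $(t_n)_{n\in\ZZ}$ covering all of $(a,\infty)$, carves the line into annuli $G_n=(t_{n-1},t_n]$, and only thins out those annuli whose $\mu$-mass is infinite (keeping $G_n$ itself when $\mu(G_n)<\infty$). Because some annuli near $a$ may retain all or none of their mass, the verification that $\mu_E((a,b))=\infty$ splits into two cases: finitely many infinite-mass annuli accumulate at $a$ (then $E$ contains a full punctured neighbourhood $(a,t_N]$, of infinite $\mu$-mass), or infinitely many do (then each contributes mass $>1$ and the sum diverges). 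Your construction instead builds the cut points $(s_n)$ \emph{adaptively}: semi-finiteness supplies a finite-mass set $F_n\subset(a,s_n)$ with mass $>2$, and continuity from below lets you push $s_{n+1}$ close enough to $a$ that the truncated piece $E_n=F_n\cap(s_{n+1},s_n)$ still carries mass in $[1,\infty)$. The payoff is that every annulus piece has mass bounded below by $1$, so the divergence of $\mu_E((a,b))$ is automatic and the case split disappears. You also only build $E$ inside a fixed window $(a,b_0)$ and observe that this already settles the claim for all $b$, whereas the paper constructs $E$ on all of $(a,\infty)$; both reductions work. Your approach is arguably the more streamlined of the two, at the cost of the slightly more delicate continuity-from-below step that justifies the choice of $s_{n+1}$.
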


\begin{proof}
We start by constructing a candidate for the set $E$. To this end, first choose a bi-infinite increasing sequence $(t_n)_{n\in\ZZ} \subset (a,\infty)$ such that $\lim_{n\to-\infty} t_n = a$ and $\lim_{n\to\infty} t_n = \infty$, set $G_n := (t_{n-1},t_n]$ for $n \in \ZZ$, and denote by $I\subset\ZZ$ the set of indices $n$ for which $\mu(G_n) = \infty$.
%Moreover, as $\mu$ is semi-finite, there is an increasing sequence $(A_m)_{m\in\NN} \subset \cB(\RR)$ such that $\bigcup_{m\in\NN} A_m = \RR$ and $\mu(A_m) < \infty$ for each $m \in \NN$.
By Proposition~\ref{prop:semi-finite}~(c), there is, for each $n \in I$, a Borel set $F_n \subset G_n$ such that $1 < \mu(F_n) < \infty$. For $n \in \ZZ \setminus I$, we set $F_n = G_n$. Note that with these definitions,
\begin{align}
\label{eqn:lem:subset:pf:finite}
\mu(F_n)
&<\infty, \quad n\in\ZZ.
\end{align}
Finally, we set $E = \bigcup_{n \in \ZZ} F_n$. (Note that $E$ is a disjoint union and that $E = (a,\infty)$ if and only if $I$ is empty.)

Now we verify that $E$ has the asserted properties. Fix $b \in (a,\infty)$. If $t \in (a,b)$, then
\begin{align*}
\mu_E((t,b))
&= \mu((t,b)\cap E)
= \sum_{n\in\ZZ} \mu\left((t,b)\cap F_n\right)
\end{align*}
is finite because of \eqref{eqn:lem:subset:pf:finite} and the fact that $(t,b) \cap F_n = \emptyset$ for all but finitely many $n \in \ZZ$. It thus remains to show that $\mu_E((a,b))=\infty$. We distinguish two cases. First, suppose that the set $I^- := \lbrace n \in I: n < 0\rbrace$ is finite. Then there is $N\in\ZZ$ such that $\mu(G_n) < \infty$ for all $n \leq N$, which in turn gives $E \cap (a,t_N] = (a,t_N]$. Making $N$ smaller if necessary, we may assume that $t_N < b$. Then
\begin{align*}
\mu_E((a,b))
&\geq \mu((a,t_N]\cap E)
= \mu((a,t_N])
= \infty,
\end{align*}
where we use the assumption on $\mu$ in the last equality. Second, suppose that the set $I^-$ is infinite. Then recalling that $\mu(F_n) > 1$ for $n \in I$, we obtain
\begin{align*}
\mu_E((a,b))
&= \mu((a,b)\cap E)
\geq \sum_{\substack{n \in I^-:\\t_n < b}} \mu\left(F_n\right)
= \infty.\qedhere
\end{align*}
\end{proof}

The implication ``(I$_a$) $\Rightarrow$ (M$_a$)'' of Theorem~\ref{thm:local} is proved by contraposition: Assuming that (M$_a$) fails, we want to construct a nonnegative Borel function $y$ which is $\mu$-integrable over $(a,b)$, positive with positive $\mu$-measure, and still satisfies the integral inequality \eqref{eqn:thm:local:inequality}. The next result shows that in these properties of $y$, we may replace $\mu$ by its semi-finite part $\mu_\sef$.

\begin{lemma}
\label{lem:mu sf to mu}
Let $\mu$ be a Borel measure on $\RR$ and fix $-\infty\leq a<b\leq \infty$. Then the following are equivalent:
\begin{enumerate}
\item There is a nonnegative $\mu$-integrable Borel function $y:(a,b)\to [0,\infty)$ which is positive with positive $\mu$-measure and satisfies 
\begin{align}
\label{eqn:lem:mu sf to mu:mu}
y(t)
&\leq \int_{(a,t)} y \dd\mu \quad\text{for } \mu\text{-a.e.~} t\in(a,b).
\end{align}
\item There is a nonnegative $\mu_\sef$-integrable Borel function $y_\sef:(a,b)\to[0,\infty)$ which is positive with positive $\mu_\sef$-measure and satisfies
\begin{align}
\label{eqn:lem:mu sf to mu:mu sf}
y_\sef(t)
&\leq \int_{(a,t)} y_\sef \dd\mu_\sef \quad\text{for } \mu_\sef\text{-a.e.~} t\in(a,b).
\end{align}
\end{enumerate}
\end{lemma}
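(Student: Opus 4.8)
The plan is to prove the two implications directly, shuttling integrals between $\mu$ and $\mu_\sef$ via Proposition~\ref{prop:semi-finite}; it is convenient to regard every function below as a Borel function on $\RR$ vanishing off $(a,b)$, so that Proposition~\ref{prop:semi-finite} can be applied on the whole line. For ``(a) $\Rightarrow$ (b)'' I would simply take $y_\sef := y$. Since $y\1_{(a,t)}$ is $\mu$-integrable for each $t \in (a,b)$, Proposition~\ref{prop:semi-finite}~(b) gives $\int_{(a,t)} y \dd\mu = \int_{(a,t)} y \dd\mu_\sef$, and $\mu_\sef \ll \mu$ (Proposition~\ref{prop:semi-finite}~(a)) turns the $\mu$-a.e.\ inequality \eqref{eqn:lem:mu sf to mu:mu} into the $\mu_\sef$-a.e.\ inequality \eqref{eqn:lem:mu sf to mu:mu sf}. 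The only thing to check is that $\{y > 0\}$ is not $\mu_\sef$-null: if it were, then $y = 0$ $\mu_\sef$-a.e., so $\int_{(a,t)} y \dd\mu = \int_{(a,t)} y \dd\mu_\sef = 0$ for every $t$, and \eqref{eqn:lem:mu sf to mu:mu} together with $y \geq 0$ would force $y = 0$ $\mu$-a.e., contradicting that $y$ is positive with positive $\mu$-measure.

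For ``(b) $\Rightarrow$ (a)'', two things go wrong if one naively takes $y := y_\sef$: a $\mu_\sef$-integrable function need not be $\mu$-integrable, and \eqref{eqn:lem:mu sf to mu:mu sf} holds only $\mu_\sef$-a.e., whereas we need a $\mu$-a.e.\ statement; both defects are supported on sets that are $\mu_\sef$-null but may have infinite $\mu$-measure. For the first, I would invoke Proposition~\ref{prop:semi-finite}~(b$^\prime$) to get a $\mu$-integrable Borel function equal to $y_\sef$ $\mu_\sef$-a.e.; taking its positive part and restricting to $(a,b)$ yields a nonnegative $\mu$-integrable Borel function $\tilde y$ on $(a,b)$ with $\tilde y = y_\sef$ $\mu_\sef$-a.e. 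By Proposition~\ref{prop:semi-finite}~(b) again, $\int_{(a,t)} \tilde y \dd\mu = \int_{(a,t)} \tilde y \dd\mu_\sef = \int_{(a,t)} y_\sef \dd\mu_\sef =: Y_\sef(t)$ for every $t \in (a,b)$, and $Y_\sef$ is finite-valued, nonnegative, and nondecreasing, hence Borel.

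For the second defect, I would set $y := \min(\tilde y, Y_\sef)$ on $(a,b)$: a nonnegative Borel function with $y \leq \tilde y$, hence $\mu$-integrable. On the $\mu_\sef$-conull set where both $\tilde y = y_\sef$ and \eqref{eqn:lem:mu sf to mu:mu sf} hold we have $y = \tilde y = y_\sef$, so $y = y_\sef$ $\mu_\sef$-a.e.; hence $\{y > 0\}$ has positive $\mu_\sef$-measure, so positive $\mu$-measure, and $\int_{(a,t)} y \dd\mu = \int_{(a,t)} y \dd\mu_\sef = Y_\sef(t)$ for all $t$. Finally $y(t) = \min(\tilde y(t), Y_\sef(t)) \leq Y_\sef(t) = \int_{(a,t)} y \dd\mu$ for \emph{every} $t \in (a,b)$, which is stronger than \eqref{eqn:lem:mu sf to mu:mu}. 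The one genuine difficulty is upgrading the $\mu_\sef$-a.e.\ inequality to a $\mu$-a.e.\ (in fact everywhere) inequality; truncating $\tilde y$ against $Y_\sef$ removes it, and everything else is bookkeeping with Proposition~\ref{prop:semi-finite}.
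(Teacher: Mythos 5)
Your proof is correct and takes essentially the same route as the paper's: in ``(b) $\Rightarrow$ (a)'' your $y = \min(\tilde y, Y_\sef)$ with $\tilde y = \max(\ol y, 0)$ is exactly the paper's truncation $y(t) = \min\bigl(\max(\ol y(t),0), \int_{(a,t)} y_\sef \dd\mu_\sef\bigr)$, verified in the same way. The only cosmetic difference is in ``(a) $\Rightarrow$ (b)'', where you reprove the needed special case of Corollary~\ref{cor:semi-finite} inline instead of citing it.
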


\begin{proof}
``(a) $\Rightarrow$ (b)'': Let $y$ be as in (a) and set $y_\sef := y$. Applying Proposition~\ref{prop:semi-finite}~(a) and (b) to \eqref{eqn:lem:mu sf to mu:mu} shows that $y_\sef$ satisfies \eqref{eqn:lem:mu sf to mu:mu sf}. Moreover, $\lbrace y > 0 \rbrace$ is not a $\mu$-nullset by assumption. Hence, by Corollary~\ref{cor:semi-finite}, $\lbrace y > 0 \rbrace$ is also not a $\mu_\sef$-nullset. Therefore, $y_\sef = y$ is positive with positive $\mu_\sef$-measure. So (b) holds.

``(b) $\Rightarrow$ (a)'': Let $y_\sef$ be as in (b). As $y_\sef$ is $\mu_\sef$-integrable over $(a,b)$, there is by Proposition~\ref{prop:semi-finite}~(b$^\prime$) a $\mu$-integrable Borel function $\ol y:(a,b)\to \RR$ with $\ol y = y_\sef$ $\mu_\sef$-a.e.~on $(a,b)$.

We now use $\ol y$ to construct a nonnegative Borel function $y$ which coincides with $y_\sef$ $\mu_\sef$-a.e.~on~$(a,b)$, is $\mu$-integrable over $(a,b)$, and satisfies the integral inequality $y(t) \leq \int_{(a,t)} y \dd \mu$ \emph{for each} $t \in (a,b)$. To this end, we truncate $\ol y$ as follows. Define the function $y:(a,b)\to[0,\infty)$ by
\begin{align*}
y(t)
&= \min\Big(\max(\ol y(t),0), \int_{(a,t)} y_\sef \dd\mu_\sef \Big).
\end{align*}
As $\ol y = y_\sef$ $\mu_\sef$-a.e.~on $(a,b)$ and $0 \leq y_\sef \leq \int_{(a,\cdot)}y_\sef\dd\mu_\sef$ $\mu_\sef$-a.e.~on $(a,b)$~by (b), we find that $y = y_\sef$ $\mu_\sef$-a.e.~on $(a,b)$. Moreover, $0 \leq y \leq \max(\ol y,0)$ by construction, so that $y$ is $\mu$-integrable over $(a,b)$. In addition, by construction of $y$, the fact that $y = y_\sef$ $\mu_\sef$-a.e.~on $(a,b)$, and Proposition~\ref{prop:semi-finite}~(b), the integral inequality for $y$ obtains:
\begin{align*}
y(t)
&\leq \int_{(a,t)}y_\sef \dd\mu_\sef
= \int_{(a,t)} y \dd\mu_\sef
= \int_{(a,t)} y \dd\mu,\quad t\in(a,b).
\end{align*}

Finally, as the set $\lbrace y > 0 \rbrace$ has positive $\mu_\sef$-measure, it also has positive $\mu$-measure by absolute continuity of $\mu_\sef$ with respect to $\mu$ (Proposition~\ref{prop:semi-finite}~(a)).
\end{proof}

The final result of this section is used only in the counter-example in Remark~\ref{rem:counter-examples}~(c) and not in the proofs of the main results in the following Section~\ref{sec:proofs}. It provides a constant-sign property for solutions to the homogeneous linear integral \emph{equation} \eqref{eqn:lem:integral equation} for general Borel measures on $\RR$.

\begin{lemma}
\label{lem:integral equation}
Let $\mu$ be a Borel measure on $\RR$ and fix $-\infty\leq a<b\leq \infty$. Suppose that $y:(a,b)\to\RR$ is $\mu$-integrable over $(a,t)$ for each $t \in (a,b)$ and solves the integral equation
\begin{align}
\label{eqn:lem:integral equation}
y(t)
&= \int_{(a,t)} y \dd\mu, \quad t\in(a,b).
\end{align}
Then $y$ has constant sign ($y \geq 0$ on $(a,b)$ or $y \leq 0$ on $(a,b)$) and is monotone (nondecreasing or nonincreasing).
\end{lemma}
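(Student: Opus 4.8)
The plan is to reduce the statement to the constant-sign property and then prove the latter by a contradiction argument localised at a suitable point. Subtracting two instances of the integral equation \eqref{eqn:lem:integral equation} gives, for $a<s<t<b$, the increment formula $y(t)-y(s)=\int_{[s,t)}y\dd\mu$. Hence, once we know that $y$ does not take both a strictly positive and a strictly negative value, it follows at once that $y$ is nondecreasing when $y\geq 0$ on $(a,b)$ and nonincreasing when $y\leq 0$ on $(a,b)$; so it suffices to establish this constant-sign property. Note that both ``constant sign'' and ``monotone'' are invariant under $y\mapsto -y$.

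To prove the constant-sign property, suppose for contradiction that $y$ takes both signs; replacing $y$ by $-y$ if necessary, we may pick $s<t$ in $(a,b)$ with $y(s)>0>y(t)$, and we set
\[
\tau \;=\; \inf\{\, r\in(s,b) : y(r)<0 \,\}\;\in\;[s,t],
\]
which is well defined since $t$ belongs to this set. By the definition of $\tau$ we have $y\geq 0$ on $[s,\tau)$, so the increment formula yields $y(\tau)=y(s)+\int_{[s,\tau)}y\dd\mu\geq y(s)>0$; in particular $y(\tau)\neq 0$, whence $\tau\neq t$ and therefore $a<s\leq\tau<t<b$. By the definition of the infimum, we may now choose a sequence $(r_n)_{n\in\NN}$ with $r_n\downarrow\tau$, $r_n>\tau$, and $y(r_n)<0$ for every $n$. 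Applying the increment formula once more, $y(r_n)=y(\tau)+\int_{[\tau,r_n)}y\dd\mu$. Since $\tau>a$ and $r_1<b$, the function $\vert y\vert\1_{[\tau,r_1)}$ is $\mu$-integrable, and $[\tau,r_n)\downarrow\{\tau\}$ as $n\to\infty$, so dominated convergence gives
\[
\lim_{n\to\infty}y(r_n)\;=\;y(\tau)+y(\tau)\mu(\{\tau\})\;=\;y(\tau)\bigl(1+\mu(\{\tau\})\bigr)\;>\;0,
\]
contradicting $y(r_n)<0$ for all $n$. This proves the constant-sign property, and with it the lemma.

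The step I expect to be the main obstacle is this final limiting argument together with the choice of the right point $\tau$ at which to carry it out. The naive route -- applying the Gronwall--Bellman lemma (Lemma~\ref{lem:gronwall}) to $\vert y\vert$, which satisfies $\vert y(t)\vert\leq\int_{(a,t)}\vert y\vert\dd\mu$ -- is unavailable here because no finiteness of $\mu$ on the intervals $(a,t)$ is assumed (indeed, in the application in Remark~\ref{rem:counter-examples}~(c) these intervals have infinite mass). The remedy is to localise at the infimum $\tau$ of the ``negativity set'': nonnegativity of $y$ just to the left of $\tau$ forces $y(\tau)>0$, and then dominated convergence -- which needs only that $y$ be integrable over intervals bounded away from $a$, exactly as assumed -- forces $y$ to remain positive just to the right of $\tau$, contradicting the choice of $\tau$.
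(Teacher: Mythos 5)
Your proof is correct and follows essentially the same strategy as the paper's: form the infimum $\tau$ of the set where $y$ has the wrong sign, exploit the increment identity $y(t)-y(s)=\int_{[s,t)}y\,\mathrm{d}\mu$, and derive a contradiction via dominated convergence on the shrinking intervals. The only (cosmetic) difference is that you first establish $y(\tau)>0$ outright, which merges the paper's two cases (infimum attained vs.\ not attained) into a single limiting argument.
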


Note that the integral equation \eqref{eqn:lem:integral equation} may have nontrivial solutions. For example, if $a=0$, $b=\infty$, and $\mu(\diff t) = \frac{1}{t}\1_{(0,\infty)}(t) \dd t$, then the identity function $y(t) = t$, $t \in (0,\infty)$, solves \eqref{eqn:lem:integral equation}.

\begin{proof}
To prove that $y$ has constant sign, it suffices to show that $y$ has the following property: if $y(t) \neq 0$ for some $t \in (a,b)$, then for every $t' > t$ in $(a,b)$, $y(t')$ is nonzero and has the same sign as $y(t)$. Fix $t \in (a,b)$ such that $y(t) \neq 0$. As $-y$ also solves \eqref{eqn:lem:integral equation}, we may assume without loss of generality that $y(t) > 0$. By \eqref{eqn:lem:integral equation},
\begin{align}
\label{eqn:lem:integral equation:pf:calculation}
\begin{split}
y(u)
&= \int_{(a,u)} y \dd\mu
= \int_{(a,t)} y \dd\mu + y(t)\mu(\lbrace t \rbrace) +  \int_{(t,u)} y \dd\mu\\
&= y(t)(1+\mu(\lbrace t \rbrace)) + \int_{(t,u)} y \dd\mu, \quad u \in (t,b).
\end{split}
\end{align}
Seeking a contradiction, suppose that there is $t' \in(t,b)$ such that $y(t') \leq 0$. Then
\begin{align*}
t^\star
&:= \inf\lbrace t'\in (t,b): y(t') \leq 0 \rbrace \in [t,b).
\end{align*}
We now distinguish two cases. First, suppose that $t^\star$ attains the infimum, i.e., $y(t^\star) \leq 0$. Then $t^\star > t$ (because $y(t) > 0$) and by definition of $t^\star$, $y > 0$ on $[t,t^\star)$. Thus, using \eqref{eqn:lem:integral equation:pf:calculation} for $u = t^\star$ yields
\begin{align*}
0
\geq y(t^\star)
= y(t)(1+\mu(\lbrace t \rbrace)) + \int_{(t,t^\star)} y \dd\mu
>0,
\end{align*}
which is absurd. Second, suppose that $t^\star$ does not attain the infimum, i.e., $y(t^\star) > 0$. Then there is a sequence $(t_n)_{n\in\NN}\subset(t^\star,b)$ decreasing to $t^\star$ such that $y(t_n) \leq 0$. Then by \eqref{eqn:lem:integral equation:pf:calculation} for $u = t_n$,
\begin{align*}
0
&\geq y(t_n)
= y(t)(1+\mu(\lbrace t \rbrace) + \int_{(t,t_n)} y \dd\mu, \quad n\in\NN.
\end{align*}
Letting $n\to\infty$, using dominated convergence, and noting that $\bigcap_{n \in \NN} (t,t_n) = (t,t^\star]$ because $t_n > t^\star$ for all $n$ yields
\begin{align*}
0
&\geq y(t)(1+\mu(\lbrace t\rbrace)) + \int_{(t,t^\star]} y \dd\mu > 0,
\end{align*}
which is again a contradiction. We conclude that $y(t') > 0$ for all $t' > t$ in $(a,b)$.

The monotonicity of $y$ now follows immediately from the constant-sign property and \eqref{eqn:lem:integral equation}.
\end{proof}

%=================================================================================
\section{Proofs of the main results}
%=================================================================================
\label{sec:proofs}

We are now in a position to prove our main results. We begin with the ``local'' version.

\begin{proof}[Proof of Theorem~\ref{thm:local}]
``(M$_a$) $\Rightarrow$ (I$_a$)'': Suppose that $\mu_\sef((a,t)) < \infty$ for some $t > a$, set $b = t$, and let $y$ be as in (I$_a$). We proceed to show that $y \leq 0$ $\mu$-a.e.~on $(a,b)$. By \eqref{eqn:thm:local:inequality} and Proposition~\ref{prop:semi-finite}~(b),
\begin{align}
\label{eqn:thm:local:pf:inequality}
y(t)
&\leq \int_{(a,t)} y \dd\mu_\sef \quad \text{for } \mu\text{-a.e.~} t \in (a,b).
\end{align}
Because $\mu_\sef$ is absolutely continuous with respect to $\mu$ (Proposition~\ref{prop:semi-finite}~(a)), \eqref{eqn:thm:local:pf:inequality} also holds $\mu_\sef$-a.e.~on $(a,b)$. Thus, we may apply the Gronwall--Bellman lemma (Lemma~\ref{lem:gronwall}) to $y$ and $\mu_\sef$, which yields $y \leq 0$ $\mu_\sef$-a.e.~on $(a,b)$. Combining this with \eqref{eqn:thm:local:pf:inequality} then gives $y \leq 0$ $\mu$-a.e.~on $(a,b)$ (alternatively, one can also invoke Corollary~\ref{cor:semi-finite} for this last step).

``(I$_a$) $\Rightarrow$ (M$_a$)'': We prove the contrapositive. Suppose that $\mu_\sef((a,t)) = \infty$ for every $t>a$, and fix any $b\in(a,\infty)$. By Lemma~\ref{lem:mu sf to mu}, it suffices to construct a nonnegative $\mu_\sef$-integrable Borel function $y_\sef$ on $(a,b)$ which is positive with positive $\mu_\sef$-measure and satisfies
\begin{align}
\label{eqn:thm:local:pf:integral inequality}
y_\sef(t)
&\leq \int_{(a,t)} y_\sef \dd\mu_\sef \quad\text{for }\mu_\sef \text{-a.e.~}t \in (a,b).
\end{align}

By Lemma~\ref{lem:subset} (applied to $\mu_\sef$), there is a Borel set $E \subset (a,\infty)$ such that the Borel measure $\mu_{\sef,E}$ (defined as in Lemma~\ref{lem:subset}) fulfils the assumption of Lemma~\ref{lem:construction} (with $\mu$ replaced by $\mu_{\sef,E}$). By Lemma~\ref{lem:construction}, there is then a Borel function $\ol y_\sef :(a,b)\to(0,1]$ such that
\begin{align}
\label{eqn:thm:local:pf:y}
\ol y_\sef (t)
&= \int_{(a,t)} \ol y_\sef \dd\mu_{\sef,E}, \quad t\in(a,b),
\quad\text{and}\quad \lim_{t \uparrow \uparrow b} \ol y_\sef(t) = 1.
\end{align}
Now define $y_\sef:\RR \to [0,1]$ by $y_\sef = \ol y_\sef \1_E$ on $(a,b)$ and $0$ elsewhere. We first show that $y_\sef$ is $\mu_\sef$-integrable on $\RR$. By \eqref{eqn:thm:local:pf:y} and the construction of $\mu_{\sef,E}$,
\begin{align*}
\int_{\RR} y_\sef \dd\mu_\sef
&= \int_{(a,b)} \ol y_\sef \1_E \dd\mu_\sef
= \int_{(a,b)} \ol y_\sef \dd\mu_{\sef,E}
= 1.
\end{align*}
Moreover, $y_\sef$ is positive with positive $\mu_\sef$-measure because $\ol y_\sef$ is positive on $(a,b)$ and $\mu_\sef$ has positive (even infinite) mass on $(a,b) \cap E$ ($\mu_{\sef,E}$ has infinite mass on $(a,b)$ by construction).

It remains to show \eqref{eqn:thm:local:pf:integral inequality}. This follows from \eqref{eqn:thm:local:pf:y} and the construction of $\mu_{\sef,E}$ and $y_\sef$:
\begin{align*}
y_\sef(t)
&\leq \ol y_\sef(t)
= \int_{(a,t)} \ol y_\sef  \dd\mu_{\sef,E}
= \int_{(a,t)} \ol y_\sef \1_E \dd\mu_\sef
= \int_{(a,t)} y_\sef \dd\mu_\sef,\quad t \in (a,b).\qedhere
\end{align*}
\end{proof}

We finally prove the ``global'' version of our main result.

\begin{proof}[Proof of Theorem~\ref{thm:global}]
We first show that $\mu_\sef$ is $\sigma$-finite if (M) holds. Since $\mu_\sef$ is semi-finite, every singleton has finite $\mu_\sef$-measure. This together with (M) implies that for every $a \in \RR$, there is $\varepsilon>0$ such that $\mu_\sef([a,a+\varepsilon)) < \infty$. Thus, $\mu_\sef$ is $\sigma$-finite by Lemma~\ref{lem:sigma finite}.

``(M) $\Rightarrow$ (I)'': Suppose that for each $a \in[-\infty,\infty)$, there is $t>a$ such that $\mu_\sef((a,t)) < \infty$, and let $y$ be as in (I). We need to show that $y \leq 0$ $\mu$-a.e. It follows from Corollary~\ref{cor:semi-finite} that it is enough to show that $y \leq 0$ $\mu_\sef$-a.e. Consider
\begin{align*}
t^\star
&= \sup \lbrace t \in [-\infty,\infty] : y\leq0~\mu_\sef\text{-a.e.~on }(-\infty,t) \rbrace.
\end{align*}

We first show that $t^\star > -\infty$. By (M), there is $b \in \RR$ such that $\mu_\sef((-\infty,b)) < \infty$. Hence, by Lemma~\ref{lem:gronwall} (for $a = -\infty$) and \eqref{eqn:thm:global:inequality}, we have $y \leq 0$ $\mu_\sef$-a.e.~on $(-\infty,b)$, so that $t^\star \geq b$.

Next, we show that $t^\star = \infty$. Seeking a contradiction, suppose that $t^\star < \infty$ and let $(t_n)_{n\in\NN} \subset (-\infty,t^\star]$ be a sequence increasing to $t^\star$ such that $y \leq 0$ $\mu_\sef$-a.e.~on $(-\infty,t_n)$ for all $n \in \NN$ (this exists because $t^\star > -\infty$). Then also $y \leq 0$ $\mu_\sef$-a.e.~on the countable union $\bigcup_{n\in\NN} (-\infty, t_n) = (-\infty,t^\star)$ (i.e., $t^\star$ attains the supremum). Now by (M), there is $b \in \RR$ such that $\mu_\sef((t^\star,b)) < \infty$. Moreover, by \eqref{eqn:thm:global:inequality} and the fact that $y \leq 0$ $\mu_\sef$-a.e.~on $(-\infty,t^\star)$ we first find that $y(t^\star) \leq 0$ if $\mu_\sef$ has a point mass at $t^\star$, and then, for $t \in (t^\star,b)$,
\begin{align*}
y(t)
&\leq \int_{(-\infty,t)} y \dd\mu_\sef
\leq\int_{[t^\star,t)} y \dd\mu_\sef
\leq \int_{(t^\star,t)} y \dd\mu_\sef.
\end{align*}
We may thus apply the Gronwall--Bellman lemma (Lemma~\ref{lem:gronwall}) and conclude that $y \leq 0$ $\mu_\sef$-a.e.~on $(t^\star, b)$. Because we also have $y \leq 0$ $\mu_\sef$-a.e.~on $(-\infty, t^\star)$ and $y(t^\star) \leq 0$ if $\mu_\sef$ has a point mass at $t^\star$, we obtain that $y \leq 0$ $\mu_\sef$-a.e.~on $(-\infty,b)$. As $b > t^\star$, this contradicts the definition of $t^\star$. Thus $t^\star = \infty$, which means $y \leq 0$ $\mu_\sef$-a.e.

``(I) $\Rightarrow$ (M)'': We prove the contrapositive. Suppose that (M) fails. Then there is $a \in [-\infty,\infty)$ such that $\mu_\sef((a,t)) = \infty$ for all $t > a$. Choose any $b > a$. Then by Theorem~\ref{thm:local}, there is a real-valued Borel function $y$ on $\RR$ which is $\mu$-integrable over $(a,b)$ and satisfies
\begin{align}
\label{eqn:thm:global:pf:local integral inequality}
y(t)
&\leq \int_{(a,t)} y \dd \mu \quad \text{for } \mu\text{-a.e.~} t \in (a,b)
\end{align}
and $\mu(\lbrace t \in (a,b): y(t) > 0\rbrace) > 0$. Replacing $y$ by its positive part $\max(y,0)$ if necessary, we may assume that $y$ is nonnegative (note in particular that \eqref{eqn:thm:global:pf:local integral inequality} still holds for the positive part; cf.~Remark~\ref{rem:nonnegative}). Then the Borel function $y':= y\1_{(a,b)}$ is nonnegative on $\RR$ and positive with positive $\mu$-measure and satisfies $\int_{(-\infty,t)} y'\dd\mu <\infty$ for all $t\in\RR$. Thus, it remains to show that $y'(t) \leq \int_{(-\infty,t)} y' \dd\mu$ for $\mu$-a.e.~$t\in\RR$. As $y'$ is nonnegative and vanishes off $(a,b)$, this inequality is trivially satisfied for $t \in \RR\setminus(a,b)$. Moreover, using that $y'=y$ on $(a,b)$ by construction and \eqref{eqn:thm:global:pf:local integral inequality}, we obtain 
\begin{align*}
y'(t)
&= y(t)
\leq \int_{(a,t)} y \dd\mu
= \int_{(a,t)} y' \dd\mu
= \int_{(-\infty,t)} y' \dd\mu\quad\text{for }\mu\text{-a.e.~}t \in (a,b).
\end{align*}
We conclude that (I) fails.
\end{proof}

%=================================================================================
\small
\providecommand{\bysame}{\leavevmode\hbox to3em{\hrulefill}\thinspace}
\providecommand{\MR}{\relax\ifhmode\unskip\space\fi MR }
% \MRhref is called by the amsart/book/proc definition of \MR.
\providecommand{\MRhref}[2]{%
  \href{http://www.ams.org/mathscinet-getitem?mr=#1}{#2}
}
\providecommand{\href}[2]{#2}

%=================================================================================
\end{document}